\newcommand{\scrt}{\mathscr{T}}
\newcommand{\be}{\begin{equation}}
	\newcommand{\ee}{\end{equation}}
\newcommand{\bea}{\begin{eqnarray}}
	\newcommand{\eea}{\end{eqnarray}}
\newcommand{\bean}{\begin{eqnarray*}}
	\newcommand{\eean}{\end{eqnarray*}}
\newcommand{\brray}{\begin{array}}
	\newcommand{\erray}{\end{array}}
\newcommand{\biearray}{\begin{IEEEarray}{rCl}}
	\newcommand{\eiearray}{\end{IEEEarray}}
\newtheorem{dfn}{Definition}[section]
\newtheorem{thm}[dfn]{Theorem}
\newtheorem{lmma}[dfn]{Lemma}
\newtheorem{ppsn}[dfn]{Proposition}
\newtheorem{crlre}[dfn]{Corollary}
\newtheorem{xmpl}[dfn]{Example}
\newtheorem{rmrk}[dfn]{Remark}
\newtheoremstyle{case}{}{}{}{}{}{:}{ }{}
\theoremstyle{case}
\newtheoremstyle{claim}{}{}{}{}{}{:}{ }{}
\theoremstyle{claim}
\newcommand{\bdfn}{\begin{dfn}\rm}
	\newcommand{\bthm}{\begin{thm}}
		\newcommand{\blmma}{\begin{lmma}}
			\newcommand{\bppsn}{\begin{ppsn}}
				\newcommand{\bcrlre}{\begin{crlre}}
					\newcommand{\bxmpl}{\begin{xmpl}}
						\newcommand{\brmrk}{\begin{rmrk}\rm}
							\newcommand{\edfn}{\end{dfn}}
						\newcommand{\ethm}{\end{thm}}
					\newcommand{\elmma}{\end{lmma}}
				\newcommand{\eppsn}{\end{ppsn}}
			\newcommand{\ecrlre}{\end{crlre}}
		\newcommand{\exmpl}{\end{xmpl}}
	\newcommand{\ermrk}{\end{rmrk}}
\newcommand{\bbc}{\mathbb{C}}
\newcommand{\bbz}{\mathbb{Z}}
\newcommand{\prf}{\noindent{\it Proof\/}: }
\def \qed { \mbox{}\hfill
	$\Box$\vspace{1ex}}
\newcommand\restr[2]{{
		\left.\kern-\nulldelimiterspace 
		#1 
		\littletaller 
		\right|_{#2} 
}}
\newcommand{\littletaller}{\mathchoice{\vphantom{\big|}}{}{}{}}
\newenvironment{proof*}[1][\proofname]{\proof[#1]}{\endproof}
\begin{document}
	
	\author{{\sc Surajit Biswas and  Bipul Saurabh}}
	\date{}
	\title{Vanishing of dimensions and nonexistence of spectral triples on compact Vilenkin groups}
	\maketitle
	
		\begin{abstract} 
	We compute the spectral dimension, the dimension of a symmetric random walk, and the Gelfand-Kirillov dimension for compact Vilenkin groups. As a result, we show that these dimensions are zero for any compact, totally disconnected, metrizable topological group. We provide an explicit description of the $K$-groups for compact Vilenkin groups. We express the generators of the $K_0$-groups in terms of the corresponding matrix coefficients for two specific examples: the group of $p$-adic integers and the $p$-adic Heisenberg group. Finally, we prove the nonexistence of a natural class of spectral triples on the group of $p$-adic integers.
		\end{abstract}
		
{\bf AMS Subject Classification No.:} {\large 58}B{\large 32}, {\large 58}B{\large 34}, {\large 46}L{\large 80}.\\
	{\bf Keywords.}  Compact Vilenkin group; spectral dimension; random walk; $K$-groups; spectral triples.

\section{Introduction}

Compact abelian Vilenkin groups  were first studied in  \rm\cite{Vil-1947aa} to explore Fourier methods through their characters.  Among these, the most extensively examined group in noncommutative geometry is the additive group of $p$-adic integers. Compact Vilenkin groups (CVGs), being compact, metrizable, and totally disconnected, have  the Lebesgue covering dimension and the inductive dimensions equal to zero.  However, several other notions of dimension have emerged in the context of noncommutative geometry, prompting a natural question: do these alternative notions of dimension also vanish for CVGs? In this article, we investigate three such notions and answer this question in the affirmative. Chakraborty and Pal (\rm\cite{ChaPal-2016aa}) introduced an invariant called the spectral dimension for an ergodic $C^*$-dynamical systems. They further conjectured that the spectral dimension of a homogeneous space of a compact Lie group coincides with its dimension as a differentiable manifold. If we extend this to a totally disconnected compact group $G$, one can expect the spectral dimension to be zero. Here we prove  that if $G$ is in addition, metrizable or equivalently a CVG then this is indeed the case. The next dimension that we studied is the notion of the probability $p_k$ of returning at $1$ after $2k$ steps for a symmetric random walk on a finitely generated Hopf algebra introduced by  Banica and Vergnioux \rm\cite{BanVer-2009ab}. We  proved that if $G$ is a connected simply connected compact real Lie group with dimension $d$ then $p_k$ decays at the rate of  $k^{-d/2}$. It simply means that the quantity $\lim_{k \rightarrow \infty} \frac{-2\log p_k}{\log k}$ gives the dimension of $G$. Based on these observations, we define the dimension $d_{RW}^G$ of a symmetric random walk on a CQG. By leveraging the properties of the representation ring of a CVG $G$, we reduce the computation to that of a finite group and prove that $d_{RW}^G=0$. 

Our next objective is to investigate the noncommutative geometric aspects of these spaces along the line of Connes (\rm\cite{Con-1994aa}).  We first compute the 
$K$-groups of $C(G)$ and explicitly describe their generators. This is achieved by utilizing the fact that 
$C(G)$ can be realized as the inductive limit of a sequence of $C^*$-algebras associated with certain quotient spaces of 
$G$. To facilitate further analysis, we fix a prime $p$ and consider two specific examples: the group $\bbz_p$ of $p$-adic integers and the $p$-adic Heisenberg group $\mathbb{H}_d\left(\mathbb{Z}_p\right)$. We show that the generators of their $K_0$-groups can be expressed as finite linear combinations of matrix co-efficients of finite dimensional representation of the group.  This characterization is particularly useful when pairing elements of $K_0$-group  with an even spectral triple.  Finally, we present two negative results. First, we show that no nontrivial even spectral triple on $C(\mathbb{Z}_p)$ can be equivariant with respect to its co-multiplication map. Second, we prove that there exists no even spectral triple on the group of $p$-adic integers whose associated Dirac operator is a generalized weighted shift operator with nonzero index. To the best of our knowledge, these are among the first results of their kind: while spectral triples on $\mathbb{Z}_p$ have been studied before, the focus has typically been on their metric aspects, with little attention given to their $K$-theoretic nontriviality. Our results highlight key obstructions to constructing nontrivial spectral triples, and thus to developing Connes' framework of noncommutative geometry in the $p$-adic setting.

Let us now outline the organization of the paper. In the next section, we recall the definition of spectral dimension from \rm\cite{ChaPal-2016aa}, and the Gelfand Kirillov dimension from \rm\cite{GelKir-1966ab}.  Additionally, motivated by \rm\cite{BanVer-2009ab}, we introduce the notion of the dimension of a symmetric random walk on a compact quantum group $G$. In Section~\ref{Dimensional Invariants of Compact Vilenkin Groups}, we compute these dimensions for CVGs, showing that they all vanish. As a consequence, these dimensions are also zero for all compact, totally disconnected, and metrizable topological groups. Section~\ref{$K$-groups} deals  with the $K$-theory of CVGs. For a CVG $G$, we show that $K_1(C(G)) = 0$ and explicitly identify the generators of $K_0(C(G))$. We then focus on two examples: the (commutative) additive group of $p$-adic integers $\mathbb{Z}_p$ and the (non-commutative) $p$-adic Heisenberg group over $\mathbb{Z}_p$, describing the generators of their $K_0$-groups in terms of matrix coefficients.   Section \ref{Spectral triple} deals with  the non-existence of a class of spectral triples on the group of $p$-adic integers.

All Hilbert spaces and algebras are over the field $\bbc$. We denote the cardinality of a set $X$ by $|X|$. For a subset $B$ of a set $X$, we denote the characteristic function of $B$ on $X$ as $\mathbbm{1}_B$. The letter $p$ denotes a prime number.

\section{Preliminaries}

In this section, we recall some preliminary  related to various dimensional notions in the context of compact quantum groups. Let's start with the definition of an action of a CQG on a $C^*$-algebra. 

\begin{dfn}
A compact quantum group $G$ acts on a $C^*$-algebra $A$ if there is a $^*$-homomorphism $\tau: A\rightarrow A\otimes C(G)$ satisfying
\begin{enumerate}[(i)]
\item $(\tau\otimes id)\tau = (id\otimes \Delta)\tau$,
\item $\left\{(I\otimes g)\tau (f): f\in A, g\in C(G)\right\}$ densely spans $A\otimes C(G)$.
\end{enumerate}
We call  $A$  a homogeneous space of $G$ if the fixed point subalgebra $\{f\in A:\tau(f)=f\otimes I\}$ is $\mathbb{C}I$. In such a case, the action $\tau$ is called ergodic, and the triple $\left(A,G,\tau\right)$ is called an ergodic $C^*$-dynamical system.

A covariant representation $(\pi, u)$ of a $C^*$-dynamical system $\left(A,G,\tau\right)$ consists of a unital $^*$-representation $\pi : A\rightarrow\mathcal{L}(\mathcal{H})$, a unitary representation $u$ of $G$ on $\mathcal{H}$ (i.e. a unitary element of the multiplier algebra $M\left(\mathcal{K}(\mathcal{H})\otimes C(G)\right)$ with $(id\otimes\Delta)(u)=u_{12}u_{13}$), satisfying $$(\pi\otimes id)\tau(f) = u\left(\pi (f)\otimes I\right)u^* \mbox{  for all  } f\in A.$$
\end{dfn}

\begin{dfn}
For a $C^*$-dynamical system $(A,G,\tau)$, an operator $D$ acting on a Hilbert space $\mathcal{H}$ is equivariant with respect to a covariant representation $(\pi, u)$ of the  system if $D\otimes I$ commutes with $u$. If $(\pi,u)$ is a covariant representation of $(A,G,\tau)$ on a Hilbert space $\mathcal{H}$ and $(\mathcal{H},\pi,D)$ is a spectral triple for a dense $^*$-subalgebra $\mathcal{A}$ of $A$, then we say that $(\mathcal{H},\pi, D)$ is equivariant with respect to $(\pi,u)$ if $D$ is equivariant with respect to $(\pi,u)$.
\end{dfn}

\noindent A homogeneous space $A$ for a compact quantum group $G$ has an invariant state $\rho$ satisfying
$$(\rho\otimes id)\tau(f)=\rho (f)I,\,f\in A.$$
This invariant state $\rho$ is unique and relates to the Haar state $\mathfrak{h}$ on $G$ through the equality
$$(id\otimes \mathfrak{h})\tau(f)=\rho (f)I,\,f\in A.$$

Given an ergodic $C^*$-dynamical system $(A,G,\tau)$ with unique invariant state $\rho$, let $\left(\mathcal{H}_\rho,\pi_\rho,\Lambda_\rho\right)$ denote the GNS representation associated with $\rho$, i.e. $\mathcal{H}_\rho$ is a Hilbert space, $\Lambda_\rho: A\rightarrow\mathcal{H}_\rho$ is linear with $\Lambda_\rho (A)$ dense in $\mathcal{H}_\rho$, and $\left\langle\Lambda_\rho (f),\Lambda_\rho (g)\right\rangle=\rho(f^*g)$; and $\pi_\rho:A\rightarrow\mathcal{L}\left(\mathcal{H}_\rho\right)$ is the $^*$-representation of $A$ on $\mathcal{H}_\rho$ defined by $\pi_\rho (f)\Lambda_\rho (g)=\Lambda_\rho (fg)$. The action $\tau$ induces a unitary representation $u_\tau$ of $G$ on $\mathcal{H}_\rho$, making $\left(\pi_\rho, u_\tau\right)$ a covariant representation of the system $\left(A,G,\tau\right)$. Let $\mathcal{O}(G)$ be the dense $^*$-subalgebra of $C(G)$ generated by the matrix entries of irreducible unitary representations of $G$. We define $$\mathcal{A}=\left\{f\in A:\tau(f)\in A\otimes_{alg} \mathcal{O}(G)\right\};$$
by \rm\cite[Theorem 1.5]{Pod-1995aa}, $\mathcal{A}$ is a dense $^*$-subalgebra of $A$. Let $\mathcal{E}$ be the class of spectral triples for $\mathcal{A}$ equivariant with respect to the covariant representation $\left(\pi_\rho,u_\tau\right)$. We define the spectral dimension of the system $(A,G,\tau)$ as the quantity
$$\inf\left\{s>0:\exists\,D \text{ such that } \left(\mathcal{H}_\rho,\pi_\rho,D\right)\in\mathcal{E} \text{ and } D \text{ is } s\text{-summable}\right\}.$$
We will denote this number by $\mathcal{S}dim\left(A,G,\tau\right)$.

\begin{ppsn}
For a compact topological group $G$, in the ergodic $C^*$-dynamical system $\left(C(G),G,\Delta\right)$, we have $\mathcal{A}=\mathcal{O}(G)$.
\end{ppsn}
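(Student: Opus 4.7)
The plan is to verify the two inclusions $\mathcal{O}(G)\subseteq\mathcal{A}$ and $\mathcal{A}\subseteq\mathcal{O}(G)$ separately; essentially all of the content sits in the second one.

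For the inclusion $\mathcal{O}(G)\subseteq\mathcal{A}$, I would take a matrix coefficient $u_{ij}$ of an irreducible unitary representation $u=(u_{ij})$ of $G$ and invoke the standard identity $\Delta(u_{ij})=\sum_k u_{ik}\otimes u_{kj}$, which already lies in $\mathcal{O}(G)\otimes_{alg}\mathcal{O}(G)\subseteq C(G)\otimes_{alg}\mathcal{O}(G)$. Since $\mathcal{O}(G)$ is by definition the linear span of such $u_{ij}$, and since $\mathcal{A}$ is plainly a linear subspace of $C(G)$, this gives the inclusion.

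For the reverse inclusion $\mathcal{A}\subseteq\mathcal{O}(G)$, the crucial point is that because $G$ is a classical compact topological group, evaluation at the identity element, $\epsilon_e\colon C(G)\to\bbc$, $f\mapsto f(e)$, is a well-defined continuous $^*$-character on all of $C(G)$, and it satisfies the counit identity $(\epsilon_e\otimes id)\Delta(f)=f$ for every $f\in C(G)$, because $\Delta(f)(e,y)=f(ey)=f(y)$. Given $f\in\mathcal{A}$, I would write $\Delta(f)=\sum_{i=1}^n g_i\otimes h_i$ with $g_i\in C(G)$ and $h_i\in\mathcal{O}(G)$, and then apply $\epsilon_e\otimes id$ to both sides to conclude $f=\sum_{i=1}^n g_i(e)\,h_i$, which exhibits $f$ as a finite linear combination of elements of $\mathcal{O}(G)$.

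There is essentially no technical obstacle here. The only point worth flagging is that the argument hinges on the availability of the counit as an everywhere-defined $^*$-character on $C(G)$, a luxury of classical compact groups that fails for general compact quantum groups, where the counit is only densely defined on $\mathcal{O}(G)$. Thus the proposition should be viewed as an identification that is special to the commutative/classical setting, and the proof will not extend verbatim to non-classical CQGs.
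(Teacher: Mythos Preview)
Your proposal is correct and takes essentially the same approach as the paper: the paper proves only the inclusion $\mathcal{A}\subseteq\mathcal{O}(G)$ (tacitly treating the reverse as standard) by writing $\Delta(f)=\sum_i f_i\otimes\chi_i$ and evaluating at $(e,x)$ to get $f(x)=\sum_i f_i(e)\chi_i(x)$, which is exactly your counit argument $(\epsilon_e\otimes id)\Delta(f)=f$. Your additional verification of $\mathcal{O}(G)\subseteq\mathcal{A}$ and your remark on why this needs the classical counit are fine supplements.
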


\begin{proof}
Let $f\in \mathcal{A}$ with $\Delta (f)=\sum_{i=1}^n f_i\otimes \chi_i$ for $f_i\in A$ and $\chi_i\in\mathcal{O}(G)$. Using the identity element $e$ of $G$, we find that $f(x)=\Delta (f)(e,x)=\sum_{i=1}^n f_i(e)\cdot \chi_i(x)$ for all $x\in G$. Thus, $a=\sum_{i=1}^n f_i(e)\cdot\chi_i\in\mathcal{O}(G)$.
\end{proof}

For a finitely generated Hopf algebra $(A,u)$, Banica and Vergnioux (\rm\cite{BanVer-2009ab}) defined the notion of  probability $p_k$ of returning at $1$ after $2k$ steps and then proved that for a connected simply connected compact real Lie group $G$, $p_k$ decays at the rate of $k^{-d/2}$, where $d$ is the real dimension of $G$.  Following this observation, we define a notion of dimension $\mathrm{d}_{RW}^G$ of a random walk on a CQG.  
\bdfn Let  $(G,\Delta)$ be a CQG with Haar state $\mathfrak{h}$. Consider the set $\mathcal{R}_{\mathrm{sym}}(G)$ consisting of finite-dimensional representations $\chi$ of $G$ such that $\chi=\overline{\chi}$. For $\chi \in \mathcal{R}_{\mathrm{sym}}(G)$ with $\mbox{dim}(\chi)=k$, define the probability of returning to $1$ in $2n$ steps for a random walk on the Hopf algebra generated by matrix co-efficients of $\chi$ as follows:
\[
p_n^{\chi}= \frac{1}{k^{2n}}\mathfrak{h}\left(\chi_{11}+\chi_{22}+ \cdots \chi_{kk}\right)^{2n}.
\] 
Then the dimension of a (symmetric) random walk on $G$ is defined as
\[
\mathrm{d}_{RW}^G=\sup_{\chi \in \mathcal{R}}\Big( \overline{\lim}_{n \to \infty} \frac{-2\log  p_n^{\chi}}{\log n}\Big).
\]
\edfn
\noindent Next, we recall the definition of the Gelfand-Kirillov dimension from \rm\cite{GelKir-1966ab}.

\begin{dfn}
Let $\mathcal{A}$ be a unital algebra over a field $\kappa$, and let $\mathcal{V}$ denote the set of all finite-dimensional subspaces of $\mathcal{A}$. For each $V \in \mathcal{V}$ and $n \in \mathbb{N}_0$, define $V_n$ as the subspace of elements expressible as sums of products of at most $n$ elements of $V$, i.e., $V_n = \sum_{k=0}^n V^k$. The Gelfand-Kirillov dimension of $\mathcal{A}$ is then defined by
\[
\mathrm{GKdim}(\mathcal{A}) = \sup_{V \in \mathcal{V}} \overline{\lim}_{n \to \infty} \frac{\log \dim(V_n)}{\log n}.
\]
\end{dfn}

\section{Dimensional Invariants of Compact Vilenkin Groups}\label{Dimensional Invariants of Compact Vilenkin Groups}

We begin by recalling the definition of CVG.

\begin{dfn}\label{CVG}{\rm\cite{DelRod-2022Pre}}
A topological group $G$ is called a compact Vilenkin group if it possesses a strictly decreasing sequence  $G:=G_0\supset G_1\supset G_2\supset \ldots$ of compact open normal subgroups of $G$ such that
\begin{enumerate}[(i)]
\item $2\leq \left|G_n/G_{n+1}\right| =:\kappa_n < \infty$ for all $n\in\mathbb{N}_0$,
\item $\langle G_n\rangle_{n\in\mathbb{N}_0}$ is a basis of neighbourhoods at the identity element $e$ of $G$, and $\bigcap_{n\in\mathbb{N}_0}G_n =\{e\}$.
\end{enumerate}
\end{dfn}

\begin{xmpl}
The balls centered at the origin of a local field, such as the compact group of $p$-adic integers, are abelian Vilenkin groups. Note that, for the additive group of $p$-adic integers $G=\mathbb{Z}_p$, the associated compact open normal subgroups can be choosen as $G_n:= p^n\mathbb{Z}_p$, $n\in\mathbb{N}_0$. The group of multiplicative units in $\mathbb{Z}_p$, i.e., $\mathbb{Z}_p^{\times}$ is also an abelian CVG. A very basic example of a noncommutative CVG is the $p$-adic Heisenberg group, which will be discussed in the next section.
\end{xmpl}

We observe that for a CVG $G$, and each $n \in \mathbb{N}$, the finite family of cosets $\{ x\bullet G_n : [x] \in G / G_n \}$ forms a disjoint open cover of $G$, implying that each $G_n$ is closed. Since  $\langle G_n\rangle_{n\in\mathbb{N}_0}$  forms a neighbourhood base at $e$, it follows that the only  connected subset of $G$ containing $e$ is the singleton $\{e\}$. This shows that   $G$ is totally disconnected as the left-mulplication by any element of $G$ is a homeomorphism. Furthermore, by \rm\cite[Theorem~8.3]{HewRos-1979aa}, CVGs are metrizable. The following proposition is a slight generalization of the result in  \rm\cite[Section 1]{Onn-1972aa}, which states that  every infinite, compact, metrizable, zero-dimensional commutative topological group is a CVG. 

\begin{ppsn}\label{Characterization of CVG}
An infinite, compact, totally disconnected, and metrizable topological group is a compact Vilenkin group.
\end{ppsn}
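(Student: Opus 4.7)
The plan is to build the sequence $\langle G_n\rangle_{n\in\mathbb{N}_0}$ by inductively thinning a countable neighborhood basis of compact open normal subgroups at $e$, and to use the infiniteness of $G$ to enforce strict descent at every step.

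First I would assemble the basic building blocks. Since $G$ is metrizable, it admits a countable neighborhood basis $\{W_k\}_{k \in \mathbb{N}}$ at $e$. As $G$ is locally compact, Hausdorff, and totally disconnected, van Dantzig's theorem yields a neighborhood basis at $e$ of compact open subgroups. In a compact group every open subgroup has finite index, so only finitely many of its conjugates exist and the normal core is again a compact open, now \emph{normal}, subgroup. Shrinking each such subgroup into the corresponding $W_k$ and intersecting successively produces a decreasing sequence $\{H_k\}_{k \in \mathbb{N}}$ of compact open normal subgroups of $G$ with $H_k \subseteq W_k$ for every $k$; since $\{W_k\}$ is a neighborhood basis at $e$ in the Hausdorff group $G$, one has $\bigcap_k H_k \subseteq \bigcap_k W_k = \{e\}$.

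Next I would construct $G_n$ by induction. Set $G_0 = G$, and suppose $G_n$ is already a compact open normal subgroup of $G$. Because $G_n$ has finite index in the infinite group $G$, it is itself infinite, so there exists $x_n \in G_n \setminus \{e\}$. The sequence $\{H_k\}$ is decreasing with trivial intersection, so I can choose $k_n$ large enough that both $x_n \notin H_{k_n}$ and $H_{k_n} \subseteq W_{n+1}$, and define $G_{n+1} := G_n \cap H_{k_n}$. This $G_{n+1}$ is a compact open normal subgroup of $G$, strictly contained in $G_n$ (witnessed by $x_n \in G_n \setminus G_{n+1}$), and contained in $W_{n+1}$.

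With this sequence in hand, Definition~\ref{CVG} is immediate: each $G_n$ is compact, open, and normal; each index $\kappa_n = |G_n/G_{n+1}|$ embeds into the finite group $G/G_{n+1}$ and has size at least $2$ by strict descent; and the containments $G_n \subseteq W_n$ ensure that $\langle G_n\rangle$ is a neighborhood basis at $e$ with $\bigcap_n G_n = \{e\}$. The only non-routine issue is guaranteeing strict descent at every step, and this is precisely where the infiniteness hypothesis enters: it keeps $G_n \setminus \{e\}$ non-empty so that the separating element $x_n$ always exists. Without infiniteness the chain could stabilize at a finite $G_n$ and the construction would fail.
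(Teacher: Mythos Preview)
Your argument is correct and follows essentially the same route as the paper's proof: both use van Dantzig's theorem (the paper cites it as \cite[Theorem~7.7]{HewRos-1979aa}) to get compact open subgroups, pass to normal cores using finite index, and then inductively select a strictly decreasing chain inside a countable neighborhood basis at $e$, invoking infiniteness to guarantee strict descent. Your treatment of the strict-descent step is in fact more explicit than the paper's, which simply writes $G_n \subsetneq B(e,1/n)\cap\bigcap_{i<n}G_i$ without spelling out why a strictly smaller member of $\mathcal{V}$ can always be found.
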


\begin{proof}
Let \( G \) be an infinite, compact, totally disconnected, and metrizable topological group. By \rm\cite[Theorem 7.7]{HewRos-1979aa}, the compact open subgroups of \( G \) form a basis of neighborhoods at the identity element \( e \). Since \( G \) is infinite and compact, each open normal subgroup has finite index in \( G \) and is therefore infinite. In particular, for any compact open subgroup \( U \) of \( G \), the conjugates \( xUx^{-1} \) for \( x \in G \) are finite in number, and their intersection, \( \bigcap_{x \in G} xUx^{-1} \), is both normal and compact open in \( G \). Thus, the collection
\[
\mathcal{V} := \left\{ \bigcap_{x \in G} xUx^{-1} : U \text{ is a compact open subgroup of } G \right\}
\]
forms a basis of open neighborhoods at \( e \).

Let \( d \) be a metric on \( G \), and for each \( n \in \mathbb{N} \), let \( B\left(e, \frac{1}{n}\right) \) denote the open ball centered at \( e \) with radius \( \frac{1}{n} \). Define \( G_0 = G \in \mathcal{V} \), and for each \( n \in \mathbb{N} \), choose \( G_n \in \mathcal{V} \) such that
\[
G_n \subsetneq B\left(e, \frac{1}{n}\right) \cap \left( \bigcap_{i=0}^{n-1} G_i \right).
\]
The sequence $\langle G_n\rangle_{n \in \mathbb{N}_0}$ is then a strictly decreasing sequence of compact open normal subgroups of \( G \) satisfying the conditions of  Definition~(\ref{CVG}). This proves the claim.
\end{proof}

Let \( G \) be a CVG with a sequence of compact open normal subgroups \( \langle G_n\rangle_{n \in \mathbb{N}_0} \). The unitary dual of \( G \) is denoted by \( \widehat{G} \). Define \( G_{-1}^{\perp} = \emptyset \), and for each \( n \in \mathbb{N}_0 \), define $G_n^{\perp} := \{\chi \in \widehat{G} : \chi|_{G_n} = \mathrm{Id}\}$. The following result from \rm\cite[Section 2.1]{DelRod-2022Pre} provides a disjoint decomposition of the unitary dual \( \widehat{G} \) in terms of \( G_n^{\perp} \), \( n \in \mathbb{N}_0 \).

\begin{lmma}\label{Lemma 3.3}
Let $G$ be a compact Vilenkin group with a sequence of compact open normal subgroups $\langle G_n\rangle_{n \in \mathbb{N}_0}$. The sequence $\langle G_n^{\perp}\rangle_{n \in \mathbb{N}_0}$ consists of finite sets satisfying $G_{n-1}^{\perp} \subset G_n^{\perp}$ for all $n \in \mathbb{N}_0$, with $\widehat{G}$ as the disjoint union of $G_n^{\perp} \setminus G_{n-1}^{\perp}$ for $n \in \mathbb{N}_0$.
\end{lmma}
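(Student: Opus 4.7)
The plan is to establish the three claims—finiteness of each $G_n^\perp$, the inclusion $G_{n-1}^\perp \subset G_n^\perp$, and the decomposition of $\widehat{G}$—in sequence, extracting each from a single structural observation.

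First I would note that since $|G_n/G_{n+1}|=\kappa_n<\infty$ for every $n$ by Definition~\ref{CVG}(i), the index $[G:G_n] = \kappa_0\kappa_1\cdots\kappa_{n-1}$ is finite, so $G/G_n$ is a \emph{finite} group. The elements of $G_n^\perp$ are exactly those $\chi \in \widehat{G}$ that descend through the quotient map $G \twoheadrightarrow G/G_n$, so $G_n^\perp$ is in natural bijection with $\widehat{G/G_n}$. Since a finite group has finitely many irreducible unitary representations, $G_n^\perp$ is finite. The inclusion $G_{n-1}^\perp \subset G_n^\perp$ is then immediate from $G_n \subset G_{n-1}$: any $\chi$ trivial on $G_{n-1}$ is a fortiori trivial on $G_n$.

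The main content is the surjectivity of the $G_n^\perp$'s onto $\widehat{G}$. For any $\chi\in\widehat{G}$, the underlying Hilbert space is finite-dimensional (as $G$ is compact), so $\chi$ lands in a unitary group $U(H_\chi)$. Since $U(H_\chi)$ is a compact Lie group, it has the \emph{no small subgroups} property: there is an open neighbourhood $W$ of the identity in $U(H_\chi)$ whose only subgroup is $\{I\}$. By continuity, $\chi^{-1}(W)$ is an open neighbourhood of $e$ in $G$, and since $\langle G_n\rangle_{n\in\mathbb{N}_0}$ is a neighbourhood base at $e$ by Definition~\ref{CVG}(ii), there exists $n$ with $G_n \subset \chi^{-1}(W)$. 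But then $\chi(G_n)$ is a subgroup of $U(H_\chi)$ contained in $W$, so $\chi(G_n)=\{I\}$, i.e.\ $\chi\in G_n^\perp$. This shows $\widehat{G}=\bigcup_{n\in\mathbb{N}_0}G_n^\perp$.

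For the disjoint union description, given $\chi\in\widehat{G}$ let $n_\chi$ be the smallest index with $\chi \in G_{n_\chi}^\perp$, which exists by the previous paragraph; then $\chi \in G_{n_\chi}^\perp \setminus G_{n_\chi-1}^\perp$, and this $n_\chi$ is unique, giving the disjoint decomposition (with the convention $G_{-1}^\perp=\emptyset$ handling the trivial representation, which lies in $G_0^\perp = \widehat{G/G_0}=\widehat{\{e\}}$). The one step that deserves care is the appeal to the no-small-subgroups property of $U(H_\chi)$; everything else is bookkeeping against the indexing conventions.
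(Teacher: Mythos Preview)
Your proposal is correct and follows essentially the same approach as the paper's proof: both identify $G_n^\perp$ with $\widehat{G/G_n}$ to get finiteness, derive the inclusion from $G_n\subset G_{n-1}$, and establish $\widehat{G}=\bigcup_n G_n^\perp$ via the no-small-subgroups property of the unitary (matrix) group combined with the neighbourhood basis $\langle G_n\rangle$. The only cosmetic difference is that the paper phrases the no-small-subgroups argument in $GL_{d_\chi}(\mathbb{C})$ rather than $U(H_\chi)$.
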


\begin{proof}
For each $n \in \mathbb{N}_0$, note that $G_n^{\perp}$ is in one-to-one correspondence with $\widehat{G/G_n}$ via the natural bijection $\Theta_n: G_n^{\perp} \to \widehat{G/G_n}$, defined by $\Theta_n(\chi)(x\bullet G_n) = \chi(x)$ for $x \in G$ and $\chi \in G_n^{\perp}$. Since $\left| G/G_n \right| = \left| G/G_0 \right| \cdot \left| G_0/G_1 \right| \cdots \left| G_{n-1}/G_n \right| < \infty$ for each $n \in \mathbb{N}_0$, it follows that $\widehat{G/G_n}$ is finite, and thus $G_n^{\perp}$ is finite. Moreover, since \( G_{n-1} \supset G_n \) for each \( n \in \mathbb{N} \), we have \( G_{n-1}^{\perp} \subset G_n^{\perp} \).

We next claim that each irreducible unitary representation $\chi: G \to GL_{d_\chi}(\mathbb{C})$ for $[\chi] \in \widehat{G}$ lies in $G_n^{\perp}$ for some $n \in \mathbb{N}_0$. It suffices to show that $G_n \subseteq \ker(\chi)$ for some $n$. Since $\chi$ is continuous and $\langle G_n\rangle_{n \in \mathbb{N}_0}$ forms a neighborhood basis at $e \in G$, there exists an $n \in \mathbb{N}_0$ such that $G_n \subseteq \chi^{-1}(V)$, where $V$ is a neighborhood of $I_{d_\chi}$ in $GL_{d_\chi}(\mathbb{C})$ such that $\{I_{d_\chi}\}$ is the only subgroup of $GL_{d_\chi}(\mathbb{C})$ contained in $V$. Hence, $\chi(G_n) = \left\{I_{d_\chi}\right\}$, establishing $G_n \subseteq \ker(\chi)$.
This, along with the fact that $G_{n-1}^{\perp} \subset G_n^{\perp}$ for all $n \in \mathbb{N}$, completes the proof.
\end{proof}
 
For a compact group $G$, let $R:G\rightarrow\mathcal{L}\left(L^2(G)\right)$ denote the right regular action of $G$ on $L^2(G)$, defined by $(R_x f)(y) = f(y\bullet x)$, for $x,y\in G$. For any subset $\mathcal{S} \subseteq \widehat{G}$, let $\mathcal{M}(\mathcal{S})$ denote the subspace of $L^2(G)$ finitely spanned by the matrix coefficients of representations in $\mathcal{S}$, with the convention $\mathcal{M}(\emptyset) := \{0\}$. Note that $\mathcal{O}(G) = \mathcal{M}\left(\widehat{G}\right)\subseteq C(G)$, and the Peter-Weyl theorem asserts that $\left\{\mathcal{M}(\chi) : \chi \in \widehat{G}\right\}$ forms a set of mutually orthogonal subspaces of $L^2(G)$, and $L^2(G)$ is the closed span of their direct sum, i.e.,
\[
L^2(G) = \widehat{\bigoplus}_{\chi \in \widehat{G}} \mathcal{M}(\chi).
\]

The following lemma follows directly from the Peter-Weyl theorem and the Frobenius reciprocity, which decompose $L^2(G/H)$ in terms of the matrix coefficients of $G$, where $G$ is a compact group and $H$ is a closed normal subgroup. Thus, we state it without proof.

\begin{lmma}\label{Strong Peter-Weyl}
Let $G$ be a compact group, and let $H$ be a closed normal subgroup of $G$. If $f \in L^2(G)$ satisfies $R_x f = f$ for all $x \in H$, then
$$f \in \widehat{\bigoplus_{\substack{\chi \in \widehat{G}, \chi|_H = \mathrm{Id}}}} \mathcal{M}(\chi).$$
\end{lmma}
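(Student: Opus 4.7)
The plan is to combine the Peter-Weyl decomposition recalled just above with the normality of $H$ and the irreducibility of the characters. First I would observe that each $\mathcal{M}(\chi)$ is invariant under the right regular action: from the computation
\[
(R_x \chi_{ij})(g) = \chi_{ij}(g\bullet x) = \sum_{k=1}^{d_\chi}\chi_{kj}(x)\,\chi_{ik}(g),
\]
one reads off $R_x \chi_{ij} = \sum_k \chi_{kj}(x)\,\chi_{ik}\in\mathcal{M}(\chi)$. Hence writing $f=\sum_{\chi\in\widehat{G}} f_\chi$ with $f_\chi\in\mathcal{M}(\chi)$ according to the orthogonal decomposition $L^2(G)=\widehat{\bigoplus}_\chi\mathcal{M}(\chi)$, the hypothesis $R_x f=f$ for all $x\in H$ forces $R_x f_\chi = f_\chi$ for every $\chi$, by uniqueness of the Peter-Weyl components. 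So it suffices to show that $f_\chi = 0$ whenever $\chi|_H\neq\mathrm{Id}$.

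For this I would identify $\mathcal{M}(\chi)$ with $V_\chi^*\otimes V_\chi$ via the standard isomorphism $\phi\otimes v \mapsto \bigl(g\mapsto \phi(\chi(g)v)\bigr)$. Under this identification the right regular action becomes $R_x \leftrightarrow \mathrm{id}_{V_\chi^*}\otimes \chi(x)$, so the $R(H)$-fixed part of $\mathcal{M}(\chi)$ corresponds to $V_\chi^*\otimes V_\chi^H$, where
\[
V_\chi^H := \{\,v\in V_\chi : \chi(x)v=v \text{ for all } x\in H\,\}.
\]

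The decisive step is to show $V_\chi^H$ is stable under $\chi(G)$, which is exactly where normality enters: for $v\in V_\chi^H$, $g\in G$, and $x\in H$,
\[
\chi(x)\chi(g)v \;=\; \chi(g)\chi(g^{-1}xg)v \;=\; \chi(g)v,
\]
since $g^{-1}xg\in H$. Because $\chi$ is irreducible, the $\chi(G)$-invariant subspace $V_\chi^H$ is either $\{0\}$ or all of $V_\chi$; the latter occurs precisely when $\chi|_H=\mathrm{Id}$. Therefore if $\chi|_H\neq\mathrm{Id}$ we have $V_\chi^H=0$, so $\mathcal{M}(\chi)^{R(H)}=0$ and $f_\chi=0$, yielding the claimed decomposition.

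I do not expect any genuine obstacle here: the argument is essentially formal once Peter-Weyl is in hand, and the only substantive input is the single use of normality to upgrade $\chi(H)$-invariance to $\chi(G)$-invariance of $V_\chi^H$. The one point to be careful about is the passage from $R_x f = f$ to $R_x f_\chi = f_\chi$ for each isotypical component, which requires the continuity (in fact boundedness) of the projections onto $\mathcal{M}(\chi)$ in $L^2(G)$, and this is immediate from orthogonality of the Peter-Weyl decomposition.
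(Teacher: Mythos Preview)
Your argument is correct and is essentially the standard unpacking of what the paper invokes: the paper does not actually prove this lemma but simply cites the Peter--Weyl theorem together with Frobenius reciprocity for the decomposition of $L^2(G/H)$, and your direct computation (normality forces $V_\chi^H$ to be $\chi(G)$-stable, irreducibility then forces it to be $\{0\}$ or $V_\chi$) is precisely the content of that citation made explicit.
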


\begin{lmma}\label{Filtration of MC}
Let $G$ be a compact Vilenkin group with a sequence of compact open normal subgroups $\langle G_n\rangle_{n \in \mathbb{N}_0}$. For each $n \in \mathbb{N}_0$, the space $\mathcal{M}(G_n^{\perp})$ is a finite-dimensional subspace of $L^2(G)$, and $\mathcal{M}(G_{n-1}^{\perp}) \subset \mathcal{M}(G_n^{\perp})$. Moreover, if $\mathcal{M}(G_n^{\perp}) \ominus \mathcal{M}(G_{n-1}^{\perp})$ denotes the $L^2$-orthogonal complement of $\mathcal{M}(G_{n-1}^{\perp})$ in $\mathcal{M}(G_n^{\perp})$, then
\[
\mathcal{M}(G_n^{\perp}) \ominus \mathcal{M}(G_{n-1}^{\perp}) = \mathcal{M}(G_n^{\perp} \setminus G_{n-1}^{\perp}).
\]
\end{lmma}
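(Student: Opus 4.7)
The plan is to deduce all three assertions directly from the Peter--Weyl orthogonality relations together with Lemma~\ref{Lemma 3.3}, so the argument is essentially bookkeeping.

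First I would verify finite-dimensionality. By Lemma~\ref{Lemma 3.3}, $G_n^{\perp}$ is a finite subset of $\widehat{G}$, so
\[
\mathcal{M}(G_n^{\perp}) \;=\; \sum_{\chi \in G_n^{\perp}} \mathcal{M}(\chi)
\]
is a finite sum of spaces, each of which has dimension $d_\chi^2$. Hence $\mathcal{M}(G_n^{\perp})$ is finite-dimensional. The inclusion $\mathcal{M}(G_{n-1}^{\perp}) \subset \mathcal{M}(G_n^{\perp})$ is immediate from $G_{n-1}^{\perp} \subset G_n^{\perp}$ (again by Lemma~\ref{Lemma 3.3}).

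Next I would use the Peter--Weyl theorem to upgrade the above sum to an orthogonal direct sum. Since matrix coefficients of inequivalent irreducible unitary representations are mutually $L^2$-orthogonal, we have
\[
\mathcal{M}(G_n^{\perp}) \;=\; \bigoplus_{\chi \in G_n^{\perp}} \mathcal{M}(\chi), \qquad \mathcal{M}(G_{n-1}^{\perp}) \;=\; \bigoplus_{\chi \in G_{n-1}^{\perp}} \mathcal{M}(\chi),
\]
both being genuine orthogonal direct sums. Splitting the index set in the first decomposition as $G_n^{\perp} = G_{n-1}^{\perp} \sqcup (G_n^{\perp} \setminus G_{n-1}^{\perp})$ yields
\[
\mathcal{M}(G_n^{\perp}) \;=\; \mathcal{M}(G_{n-1}^{\perp}) \,\oplus\, \mathcal{M}(G_n^{\perp} \setminus G_{n-1}^{\perp}),
\]
again an orthogonal decomposition. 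Taking the orthogonal complement of $\mathcal{M}(G_{n-1}^{\perp})$ inside $\mathcal{M}(G_n^{\perp})$ therefore gives exactly $\mathcal{M}(G_n^{\perp} \setminus G_{n-1}^{\perp})$, which is the required identity.

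There is no real obstacle here: the entire proof rides on the Peter--Weyl orthogonality of matrix coefficients, combined with the already-established facts that the sets $G_n^{\perp}$ are finite and nested. If one wanted to be slightly more self-contained, the only subtle point worth double-checking is the degenerate case $n=0$, where $G_{-1}^{\perp} = \emptyset$ and $\mathcal{M}(\emptyset) = \{0\}$ by convention, so that the stated identity reduces to $\mathcal{M}(G_0^{\perp}) \ominus \{0\} = \mathcal{M}(G_0^{\perp})$, which is trivially true.
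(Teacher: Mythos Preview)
Your proposal is correct and follows essentially the same approach as the paper: both use Lemma~\ref{Lemma 3.3} for finiteness and nesting of the sets $G_n^{\perp}$, and then invoke the Peter--Weyl orthogonality of matrix-coefficient spaces to split $\mathcal{M}(G_n^{\perp})$ as the orthogonal direct sum $\mathcal{M}(G_{n-1}^{\perp}) \oplus \mathcal{M}(G_n^{\perp}\setminus G_{n-1}^{\perp})$. Your write-up is in fact slightly more explicit than the paper's (you spell out the $d_\chi^2$ count and the $n=0$ convention), but the underlying argument is identical.
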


\begin{proof}
By Lemma~\ref{Lemma 3.3}, the sequence $\langle G_n^{\perp}\rangle_{n \in \mathbb{N}_0}$ consists of finite sets with $G_{n-1}^{\perp} \subset G_n^{\perp}$ for all $n \in \mathbb{N}_0$. Thus, $\mathcal{M}(G_n^{\perp})$ is a finite-dimensional subspace of $L^2(G)$ for each $n \in \mathbb{N}_0$, and $\mathcal{M}(G_{n-1}^{\perp}) \subset \mathcal{M}(G_n^{\perp})$. 

For each $n \in \mathbb{N}_0$, we have the decomposition
$\mathcal{M}(G_n^{\perp}) = \mathcal{M}(G_n^{\perp} \setminus G_{n-1}^{\perp}) \oplus \mathcal{M}(G_{n-1}^{\perp})$, where the subspaces $\mathcal{M}(G_n^{\perp} \setminus G_{n-1}^{\perp})$ and $\mathcal{M}(G_{n-1}^{\perp})$ are orthogonal in $L^2(G)$, hence it follows that
$\mathcal{M}(G_n^{\perp}) \ominus \mathcal{M}(G_{n-1}^{\perp}) = \mathcal{M}(G_n^{\perp} \setminus G_{n-1}^{\perp})$, as required.
\end{proof}

\begin{lmma}\label{Filtration}
Let $G$ be a compact Vilenkin group with a sequence of compact open normal subgroups $\{G_n\}_{n \in \mathbb{N}_0}$. For all $m, n \in \mathbb{N}_0$ with $m < n$, if $f \in \mathcal{M}(G_m^\perp)$ and $g \in \mathcal{M}(G_n^\perp \setminus G_{n-1}^\perp)$, then $f \cdot g \in \mathcal{M}(G_n^\perp \setminus G_{n-1}^\perp)$.
\end{lmma}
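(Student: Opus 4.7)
The plan is to combine two properties: (a) $f\cdot g \in \mathcal{M}(G_n^\perp)$, and (b) $f\cdot g$ is $L^2$-orthogonal to $\mathcal{M}(G_{n-1}^\perp)$. Once both are established, Lemma~\ref{Filtration of MC} identifies $\mathcal{M}(G_n^\perp)\ominus\mathcal{M}(G_{n-1}^\perp)$ with $\mathcal{M}(G_n^\perp\setminus G_{n-1}^\perp)$, yielding the conclusion.

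For (a), I would exploit that the chain $\{G_k\}$ is decreasing, so $m<n$ gives $G_m \supseteq G_n$. Any irreducible $\chi$ trivial on $G_m$ is therefore trivial on $G_n$, making each matrix coefficient $\chi_{ij}$ right-$G_n$-invariant. Hence both $f\in \mathcal{M}(G_m^\perp)$ and $g\in\mathcal{M}(G_n^\perp)$ are right-$G_n$-invariant, and so is $f\cdot g$. Since $f\cdot g$ sits in $\mathcal{O}(G)$ as a finite sum of matrix coefficients, Lemma~\ref{Strong Peter-Weyl} with $H=G_n$, combined with the uniqueness of the Peter--Weyl decomposition, forces only matrix coefficients from $G_n^\perp$ to appear in $f\cdot g$. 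Thus $f\cdot g \in \mathcal{M}(G_n^\perp)$.

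For (b), for any $h\in\mathcal{M}(G_{n-1}^\perp)$ I would rewrite
$$\langle f\cdot g,\, h\rangle_{L^2(G)} = \langle g,\, \bar f\cdot h\rangle_{L^2(G)}.$$
Complex conjugation sends the matrix coefficients of $\chi$ to those of the conjugate representation $\bar\chi$, and $\bar\chi|_{G_m}=\mathrm{Id}$ whenever $\chi|_{G_m}=\mathrm{Id}$, so $\bar f \in \mathcal{M}(G_m^\perp)$. Because $m\le n-1$, Lemma~\ref{Lemma 3.3} gives $G_m^\perp\subseteq G_{n-1}^\perp$, hence $\bar f\in\mathcal{M}(G_{n-1}^\perp)$. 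Applying the Peter--Weyl argument of (a) to the product $\bar f\cdot h\in\mathcal{O}(G)$, which is right-$G_{n-1}$-invariant, places it in $\mathcal{M}(G_{n-1}^\perp)$. Since $g\in\mathcal{M}(G_n^\perp\setminus G_{n-1}^\perp)$ is orthogonal to $\mathcal{M}(G_{n-1}^\perp)$ by Lemma~\ref{Filtration of MC}, the inner product vanishes, proving (b).

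The main subtlety, used twice, is deducing that an element $\phi\in\mathcal{O}(G)$ which is right-$H$-invariant for a closed normal subgroup $H$ actually lies in the \emph{finite} span $\mathcal{M}(H^\perp)$. Lemma~\ref{Strong Peter-Weyl} only places $\phi$ in the $L^2$-closed direct sum $\widehat{\bigoplus}_{\chi\in H^\perp}\mathcal{M}(\chi)$; one must then invoke the uniqueness of the Peter--Weyl expansion of $\phi$ to pass to the finite span. This is routine but is the one technical point worth pinning down carefully.
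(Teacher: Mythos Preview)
Your proof is correct and follows essentially the same route as the paper's: establish $f\cdot g\in\mathcal{M}(G_n^\perp)$ via right-$G_n$-invariance and Lemma~\ref{Strong Peter-Weyl}, then prove orthogonality to $\mathcal{M}(G_{n-1}^\perp)$ by moving $\bar f$ across the inner product and showing $\bar f\cdot h\in\mathcal{M}(G_{n-1}^\perp)$ by the same invariance argument. The ``subtlety'' you flag is in fact a non-issue here: since $G_n^\perp$ is finite (Lemma~\ref{Lemma 3.3}), the closed sum $\widehat{\bigoplus}_{\chi\in G_n^\perp}\mathcal{M}(\chi)$ already equals the finite span $\mathcal{M}(G_n^\perp)$, so Lemma~\ref{Strong Peter-Weyl} lands you directly in $\mathcal{M}(G_n^\perp)$ without any appeal to uniqueness of the Peter--Weyl expansion.
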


\begin{proof}
For each $x \in G_n$, we have
$$R_x(f \cdot g) = (R_x f) \cdot (R_x g) = f \cdot g.$$ Thus,  $f \cdot g \in \mathcal{M}(G_n^\perp)$ by Lemma~\ref{Strong Peter-Weyl}. We now claim that $f\cdot g \in \mathcal{M}(G_n^\perp) \ominus \mathcal{M}(G_{n-1}^\perp)$, for which it is sufficient to show that $f \cdot g$ is orthogonal in $L^2(G)$ to every $h \in \mathcal{M}(G_{n-1}^\perp)$.
Assume, for contradiction, that there exists $h \in \mathcal{M}(G_{n-1}^\perp)$ such that $\langle f \cdot g, h \rangle \neq 0$. Then, $\langle g, \overline{f} \cdot h \rangle \neq 0$. However, since $\overline{f} \cdot h \in L^2(G)$ satisfies, for each $x \in G_{n-1}$ that $R_x(\overline{f} \cdot h) = \overline{f} \cdot h$, it follows by Lemma~\ref{Strong Peter-Weyl} that $\overline{f} \cdot h \in \mathcal{M}(G_{n-1}^\perp)$. Hence, $\langle g, \overline{f} \cdot h \rangle \neq 0$ implies that $g \notin \mathcal{M}(G_n^\perp) \ominus \mathcal{M}(G_{n-1}^\perp) = \mathcal{M}(G_n^\perp \setminus G_{n-1}^\perp)$, a contradiction. Thus, the claim is established, and we conclude that $f \cdot g \in \mathcal{M}(G_n^\perp \setminus G_{n-1}^\perp)$.
\end{proof}
For a CVG $G$, we consider the associated canonical ergodic $C^*$-dynamical system $\left(C(G),G,\Delta\right)$. The Haar state on $G$ serves as an invariant state for the homogeneous space $C(G)$. The relevant covariant representation of this system is given by the triple $\left(L^2(G), \pi, R\right)$, where $L^2(G)$ is the GNS Hilbert space corresponding to the faithful Haar state on $G$, $\pi$ is the representation of $C(G)$ on $L^2(G)$ through left multiplication, and $R$ is the right regular representation.

\begin{thm}\label{SD of CVG}
The spectral dimension of every compact Vilenkin group is $0$.
\end{thm}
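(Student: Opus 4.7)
The plan is to construct, directly from the filtration supplied by Lemmas~\ref{Lemma 3.3}, \ref{Filtration of MC} and \ref{Filtration}, an explicit equivariant spectral triple $(L^2(G), \pi, D)$ on $\mathcal{A} = \mathcal{O}(G)$ that is $s$-summable for every $s > 0$. Since the spectral dimension is the infimum of exponents for which such a triple exists, this will force $\mathcal{S}dim(C(G), G, \Delta) = 0$.

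Write $V_n := \mathcal{M}(G_n^\perp \setminus G_{n-1}^\perp)$ and $d_n := \dim V_n < \infty$, so that by Lemma~\ref{Filtration of MC} we have $L^2(G) = \widehat{\bigoplus}_{n \in \mathbb{N}_0} V_n$. I would define
$$D := \sum_{n \geq 1} \lambda_n P_n,$$
where $P_n$ is the orthogonal projection onto $V_n$, the sequence $(\lambda_n)_{n \geq 1}$ is a positive sequence tending to $\infty$ to be fixed in the last step, and $\lambda_0 := 0$ on $V_0 = \mathbb{C} \cdot 1$. Since each $V_n$ is finite dimensional and $\lambda_n \to \infty$, $D$ is self-adjoint with compact resolvent. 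Equivariance with respect to $R$ is automatic: each matrix coefficient space $\mathcal{M}(\chi)$ is $R$-invariant, hence so is every $V_n$, and $D$ acts as a scalar on each of them.

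The main step, and the only one where some care is needed, is to verify the bounded commutator condition $[D, \pi(f)] \in \mathcal{L}(L^2(G))$ for every $f \in \mathcal{O}(G)$. Given such an $f$, choose $m \in \mathbb{N}_0$ with $f \in \mathcal{M}(G_m^\perp)$. For $n > m$, Lemma~\ref{Filtration} yields $\pi(f) V_n \subseteq V_n$, and since $D$ is the scalar $\lambda_n$ on $V_n$, the commutator $[D, \pi(f)]$ vanishes identically on $V_n$. The complementary piece $\mathcal{M}(G_m^\perp) = \bigoplus_{k \leq m} V_k$ is a finite dimensional subspace that is invariant under $\pi(f)$ (being a subalgebra) and under $D$, so $[D, \pi(f)]$ is automatically bounded there. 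Thus $[D, \pi(f)]$ is bounded on all of $L^2(G)$.

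It remains to choose $(\lambda_n)$ so rapidly that $\sum_{n \geq 1} d_n \lambda_n^{-s} < \infty$ for every $s > 0$. For instance, taking $\lambda_n := (d_n n^2 2^n)^n$ works: for any fixed $s > 0$ and $n > 1/s$ one gets $d_n \lambda_n^{-s} \leq n^{-2ns} 2^{-n^2 s}$, which has convergent tail, and the finitely many smaller $n$ contribute only a finite sum. With this choice, $(L^2(G), \pi, D)$ is an equivariant spectral triple over $\mathcal{O}(G)$ that is $s$-summable for all $s > 0$, yielding $\mathcal{S}dim(C(G), G, \Delta) = 0$.
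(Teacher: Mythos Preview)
Your proof is correct and follows essentially the same route as the paper: define $D$ to be a scalar on each block $V_n = \mathcal{M}(G_n^\perp \setminus G_{n-1}^\perp)$, invoke Lemma~\ref{Filtration} to kill $[D,\pi(f)]$ on $V_n$ for $n$ large, and pick eigenvalues growing fast enough for summability. The only cosmetic differences are that the paper constructs a separate $D$ for each fixed $s$ (with eigenvalue $((n+1)^2 d_n)^{1/s}$ on $V_n$, so that $\mathrm{Tr}\,|D|^{-s}=\sum_n (n+1)^{-2}$) rather than a single $D$ working for all $s$, and that you should take $\lambda_0\neq 0$ (or phrase summability via $(1+D^2)^{-s/2}$) so that $|D|^{-s}$ is defined.
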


\begin{proof}
Let $G$ be a CVG, and let $\langle G_n\rangle_{n\in\mathbb{N}_0}$ be an associated sequence of compact open normal subgroups as in Definition \ref{CVG}. 
For each $s > 0$, consider the self-adjoint operator $D$ with compact resolvent specified by 
$$Df = \left( (n+1)^2 M_n \right)^{\frac{1}{s}} f,$$
for $f \in \mathcal{M}\left(G_n^{\perp} \setminus G_{n-1}^{\perp}\right),\, n \in \mathbb{N}_0$, where $M_n$ denotes the dimension of $\mathcal{M}\left(G_n^{\perp} \setminus G_{n-1}^{\perp}\right)$. We claim that for each $f \in \mathcal{O}(G)$, the commutator $[D, \pi(f)]$ extends to a bounded operator on $L^2(G)$. Since $\mathcal{O}(G) = \bigoplus_{n\in\mathbb{N}_0}\mathcal{M}\left(G_n^{\perp} \setminus G_{n-1}^{\perp}\right)$, to prove the above claim it suffices to show that for each $n\in\mathbb{N}_0$, and for each $f\in \mathcal{M}\left(G_n^{\perp} \setminus G_{n-1}^{\perp}\right)$, the commutator $[D,\pi(f)]$ extends to a bounded operator on $L^2(G)$.

Let $n \in \mathbb{N}_0$, and let $f\in \mathcal{M}\left(G_n^{\perp} \setminus G_{n-1}^{\perp}\right)$. For $m > n$ and $g \in \mathcal{M}\left(G_m^{\perp} \setminus G_{m-1}^{\perp}\right)$, we have by Lemma \ref{Filtration} that $f\cdot g \in \mathcal{M}\left(G_m^{\perp} \setminus G_{m-1}^{\perp}\right)$, and therefore
\[
[D, \pi(f)]g = D(f\cdot g) - \pi(f) D(g) = \left( (m+1)^2 M_m \right)^{\frac{1}{s}} f\cdot g - \pi(f) \left( (m+1)^2 M_m \right)^{\frac{1}{s}} g =0.
\]
Thus, $[D, \pi(f)]$ is in $\mathcal{L}(L^2(G))$, being zero on the complement of the finite-dimensional subspace $\bigoplus_{k=0}^n \mathcal{M}\left(G_k^{\perp} \setminus G_{k-1}^{\perp}\right)$ of $L^2(G)$.
Further, we have
\[
\mathrm{Tr} \, |D|^{-s} = \sum_{n \in \mathbb{N}_0} \frac{1}{(n+1)^2} < \infty,
\]
which implies that the spectral dimension \( \mathcal{S}\dim (C(G), G, \Delta) = 0 \).
\end{proof}

\begin{crlre}
The spectral dimension of every compact, totally disconnected, and metrizable topological group is $0$.
\end{crlre}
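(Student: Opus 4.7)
The plan is to deduce the corollary directly from Theorem~\ref{SD of CVG} via the structural classification in Proposition~\ref{Characterization of CVG}. Let $G$ be a compact, totally disconnected, and metrizable topological group, and split into cases depending on whether $G$ is infinite or finite.

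In the infinite case, Proposition~\ref{Characterization of CVG} asserts that $G$ is itself a compact Vilenkin group, i.e.\ it admits a strictly decreasing sequence $\langle G_n\rangle_{n\in\mathbb{N}_0}$ of compact open normal subgroups satisfying the conditions of Definition~\ref{CVG}. Applying Theorem~\ref{SD of CVG} to the canonical ergodic $C^*$-dynamical system $\left(C(G), G, \Delta\right)$ then immediately yields $\mathcal{S}\dim\left(C(G), G, \Delta\right) = 0$.

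In the finite case, the CVG definition technically excludes $G$ (the strictly decreasing chain of subgroups forces $G$ to be infinite), so this must be handled by hand. Since $L^2(G)$ is finite-dimensional, I would exhibit a trivial equivariant spectral triple by taking $D$ to be the identity operator on $L^2(G)$: this $D$ is self-adjoint with (automatically) compact resolvent, has $[D,\pi(f)] = 0$ for every $f \in \mathcal{O}(G)$, and commutes with the right regular representation $R$, so $(L^2(G), \pi, D)$ is equivariant with respect to $(\pi_\rho, u_\tau) = (\pi, R)$. For each $s > 0$, one has $\mathrm{Tr}\,|D|^{-s} = \dim L^2(G) = |G| < \infty$, so $D$ is $s$-summable for every $s > 0$, and taking the infimum gives $\mathcal{S}\dim\left(C(G), G, \Delta\right) = 0$. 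There is essentially no substantive obstacle in either case; the conceptual work has already been done in Proposition~\ref{Characterization of CVG} and Theorem~\ref{SD of CVG}, and the only minor wrinkle is remembering to accommodate the finite groups, which fall outside the scope of the CVG framework but admit a trivial direct treatment.
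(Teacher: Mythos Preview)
Your proposal is correct and follows essentially the same approach as the paper: split into the infinite case (where Proposition~\ref{Characterization of CVG} reduces to Theorem~\ref{SD of CVG}) and the finite case (where $L^2(G)$ is finite-dimensional and any self-adjoint $D$ is $s$-summable for all $s>0$). Your explicit choice $D=I$ in the finite case is just a concrete instance of the paper's observation that any self-adjoint operator on a finite-dimensional space works.
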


\begin{proof}
Let $G$ be a compact, totally disconnected, and metrizable topological group.

\textbf{Case 1:} If $G$ is finite, then the Hilbert space $L^2(G)$ is finite-dimensional. Consequently, any self-adjoint operator on $L^2(G)$ is bounded and $s$-summable for all $s > 0$. Thus, the spectral dimension of $G$ is $0$.

\textbf{Case 2:} If $G$ is infinite, then by Proposition~\ref{SD of CVG}, $G$ is a CVG. Hence, by Theorem~\ref{SD of CVG}, the spectral dimension of $G$ is $0$.
\end{proof}

We compute the dimension of a random walk on a CVG in the following.

\begin{thm}
The dimension of a (symmetric) random walk on a compact Vilenkin group is $0$.
\end{thm}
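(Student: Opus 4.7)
Fix a representation $\chi \in \mathcal{R}_{\mathrm{sym}}(G)$ with $\dim(\chi) = k$, and let $\langle G_n\rangle_{n\in\mathbb{N}_0}$ be a defining sequence of compact open normal subgroups of $G$. My plan is to reduce the computation of $p_m^\chi$ to that on a finite quotient of $G$ and derive a crude two-sided bound on $p_m^\chi$ sufficient to force the relevant upper limit to vanish.

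First I would show that $\chi$ factors through a finite quotient of $G$. Indeed, repeating the continuity argument used inside the proof of Lemma~\ref{Lemma 3.3} — choosing a neighbourhood $V$ of $I_k$ in $U(k)$ containing no nontrivial subgroup of $U(k)$ and pulling it back through the continuous map $\chi : G \to U(k)$ — one obtains an $N \in \mathbb{N}_0$ with $G_N \subseteq \ker(\chi)$. Therefore $\chi = \tilde\chi \circ \pi$, where $\pi : G \to H$ is the quotient map onto the finite group $H := G/G_N$ and $\tilde\chi : H \to U(k)$ is a unitary representation of $H$.

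Next I use that every function on $G$ constant on cosets of $G_N$ integrates against $\mathfrak{h}$ as the normalised counting average on $H$. Combined with the observation that $\sum_{i=1}^{k}\chi_{ii} = \mathrm{tr}(\tilde\chi)\circ\pi$ takes only real values (because $\chi = \overline\chi$ forces the character to equal its own complex conjugate), this gives
\[
p_m^\chi \;=\; \frac{1}{k^{2m}\,|H|}\sum_{h\in H}\bigl(\mathrm{tr}\,\tilde\chi(h)\bigr)^{2m}.
\]
All summands are non-negative; the term $h = e_H$ alone contributes $k^{2m}/|H|$, while $|\mathrm{tr}\,\tilde\chi(h)| \leq k$ controls the sum from above. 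This yields the key uniform bound
\[
\frac{1}{|H|} \;\leq\; p_m^\chi \;\leq\; 1.
\]

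From this $0 \leq -\log p_m^\chi \leq \log|H|$, so $\overline{\lim}_{m\to\infty}\frac{-2\log p_m^\chi}{\log m} = 0$ for every $\chi \in \mathcal{R}_{\mathrm{sym}}(G)$, and taking the supremum over $\chi$ gives $\mathrm{d}_{RW}^G = 0$. I do not anticipate any serious obstacle; the only conceptual input is that every continuous finite-dimensional representation of a CVG factors through a finite quotient, and the lower bound $p_m^\chi \geq 1/|H|$ then immediately forces the return probability to decay slower than any inverse power of $m$.
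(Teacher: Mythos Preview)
Your proof is correct and follows essentially the same strategy as the paper: both arguments hinge on the observation (via Lemma~\ref{Lemma 3.3}) that any $\chi\in\mathcal{R}_{\mathrm{sym}}(G)$ factors through a finite quotient $H=G/G_N$, reducing the computation of $p_m^\chi$ to that finite group. The paper phrases the reduction through the representation-ring isomorphism $\widetilde{\Theta_l}:\mathcal{R}_l(G)\to\mathcal{R}(G/G_l)$ and the identity $\mathfrak{m}(\chi^{\otimes k})=\mathfrak{m}(\widetilde{\Theta_l}(\chi)^{\otimes k})$, then simply asserts that the limit vanishes because $G/G_l$ is finite; your version is more elementary and more self-contained, bypassing the multiplicity interpretation entirely and instead extracting the explicit uniform bound $1/|H|\le p_m^\chi\le 1$ from the identity-element term in the character sum, which immediately forces $\overline{\lim}_m\frac{-2\log p_m^\chi}{\log m}=0$.
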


\begin{proof}
Let $G$ be a CVG. Denote the representation ring of $G$ by $\mathcal{R}(G)$, and for each $l \in \mathbb{N}_0$, let $\mathcal{R}_l(G)$ be the representation subring defined by $$\mathcal{R}_l(G) := \left\{\chi \in \mathcal{R}(G) : \chi|_{G_l} = \mathrm{Id}\right\}.$$ 
The bijection \(\Theta_l : G_l^\perp \to \widehat{G/G_l}\) given by
\[
\Theta_l(\chi)\left(x \bullet G_l\right) = \chi(x), \quad x \in G, \; \chi \in G_l^\perp,
\]
extends to a ring isomorphism \(\widetilde{\Theta_l} : \mathcal{R}_l(G) \to \mathcal{R}(G/G_l)\).

For a finite-dimensional representation \(\boldsymbol{\chi}\), let \(\mathfrak{m}(\boldsymbol{\chi})\) denote the multiplicity of the trivial representation in the decomposition of \(\boldsymbol{\chi}\) into irreducible components. Then, for any \(\chi \in \mathcal{R}_l(G)\) and \(k \in \mathbb{N}\), it is easy to see that
\[
\mathfrak{m}(\chi^{\otimes k}) = \mathfrak{m}\left(\widetilde{\Theta_l}(\chi)^{\otimes k}\right).
\]

Now, let \(\chi \in \mathcal{R}_{\mathrm{sym}}(G)\) with \(\dim(\chi) = k\). Then \(\chi \in \mathcal{R}_l(G)\) for some \(l \in \mathbb{N}_0\), and thus \(\widetilde{\Theta_l}(\chi) \in \mathcal{R}_{\mathrm{sym}}(G/G_l)\). Moreover,
\[
\mathfrak{h}\left(\chi_{11} + \cdots + \chi_{kk}\right)^{2n} = \mathfrak{m}(\chi^{\otimes n}) = \mathfrak{m}\left(\widetilde{\Theta_l}(\chi)^{\otimes n}\right) = \mathfrak{h}\left(\left(\widetilde{\Theta_l}(\chi)\right)_{11} + \cdots + \left(\widetilde{\Theta_l}(\chi)\right)_{kk}\right)^{2n},
\]
which implies \(p_n^\chi = p_n^{\widetilde{\Theta_l}(\chi)}\).

Since \(G/G_l\) is finite, it follows that $\displaystyle\overline{\lim}_{n \to \infty} \frac{-2 \log p_n^{\widetilde{\Theta_l}(\chi)}}{\log n} = 0$, and hence $\displaystyle\overline{\lim}_{n \to \infty} \frac{-2 \log p_n^\chi}{\log n} = 0$. Therefore, \(d_{RW}^G = 0\).
\end{proof}

The following corollary is immediate.

\begin{crlre}
The dimension of a (symmetric) random walk on a compact, totally disconnected, and metrizable topological group is $0$.
\end{crlre}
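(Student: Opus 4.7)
The plan is to handle the finite and infinite cases separately, leveraging the preceding theorem for the infinite case and giving a short direct argument for the finite case.

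First I would dispose of the infinite case. If $G$ is infinite, compact, totally disconnected, and metrizable, then by Proposition~\ref{Characterization of CVG}, $G$ is a compact Vilenkin group. Applying the preceding theorem to $G$ immediately yields $d_{RW}^G = 0$. This step is purely invocational and involves no real work.

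Next I would handle the finite case. The point here is that for a finite group $G$ with normalized counting Haar state $\mathfrak{h}$, and any $\chi \in \mathcal{R}_{\mathrm{sym}}(G)$ with $\dim(\chi) = k$, one has
\[
\mathfrak{h}\bigl(\chi_{11} + \cdots + \chi_{kk}\bigr)^{2n} \;=\; \frac{1}{|G|}\sum_{x \in G} \bigl(\mathrm{tr}\,\chi(x)\bigr)^{2n} \;\geq\; \frac{(\mathrm{tr}\,\chi(e))^{2n}}{|G|} \;=\; \frac{k^{2n}}{|G|},
\]
isolating the contribution of the identity element. Dividing by $k^{2n}$ gives $p_n^{\chi} \geq |G|^{-1}$, a positive constant independent of $n$, so $-2 \log p_n^{\chi}$ is bounded above and
\[
\overline{\lim}_{n \to \infty} \frac{-2 \log p_n^{\chi}}{\log n} \;=\; 0.
\]
Taking the supremum over $\chi \in \mathcal{R}_{\mathrm{sym}}(G)$ yields $d_{RW}^G = 0$.

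Combining the two cases gives the corollary. I do not anticipate any real obstacles here: Proposition~\ref{Characterization of CVG} already packages the structural reduction, and the finite-group estimate is essentially the trivial bound used implicitly at the end of the proof of the preceding theorem.
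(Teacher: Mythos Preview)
Your proposal is correct and matches the paper's intended argument: the paper states the corollary as ``immediate'' with no written proof, but the obvious reading---and the parallel proof given for the analogous spectral-dimension corollary---is precisely your case split, invoking Proposition~\ref{Characterization of CVG} and the preceding theorem for infinite $G$, and handling finite $G$ directly. Your explicit finite-group bound $p_n^{\chi}\ge |G|^{-1}$ also fills in the step the paper leaves unjustified at the end of the preceding theorem (``Since $G/G_l$ is finite, it follows that $\overline{\lim}_n \frac{-2\log p_n^{\widetilde{\Theta_l}(\chi)}}{\log n}=0$'').
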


Now, we compute the GK dimension for a CVG.

\begin{thm}
Let $G$ be a compact Vilenkin group. Then, we have the folllowing:
$$\mathrm{GKdim}(\mathcal{O}(G)) = 0.$$
\end{thm}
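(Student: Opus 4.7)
My plan is to exploit the fact that $\mathcal{O}(G)$ is a directed union of finite-dimensional unital subalgebras, which is precisely the structural feature that forces the Gelfand-Kirillov dimension to vanish.

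First I would fix an arbitrary finite-dimensional subspace $V \in \mathcal{V}$ and trap it inside one of the layers $\mathcal{M}(G_N^\perp)$. Using the Peter-Weyl decomposition $\mathcal{O}(G) = \bigoplus_{n\in\mathbb{N}_0}\mathcal{M}(G_n^\perp \setminus G_{n-1}^\perp)$ and the fact that $V$ is finite-dimensional, only finitely many isotypic components of $V$ are nonzero. Combined with the chain $\mathcal{M}(G_0^\perp)\subset \mathcal{M}(G_1^\perp)\subset\cdots$ from Lemma \ref{Filtration of MC}, this yields an $N \in \mathbb{N}_0$ with $V \subseteq \mathcal{M}(G_N^\perp)$, and $\mathcal{M}(G_N^\perp)$ is itself finite-dimensional.

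Next I would verify that $\mathcal{M}(G_N^\perp)$ is closed under pointwise multiplication. For any $\chi \in G_N^\perp$, its matrix coefficients are right-invariant under $G_N$: given $x \in G_N$ and a coefficient $f(y)=\langle \chi(y)v,w\rangle$, one has $R_x f(y)=\langle \chi(y)\chi(x)v,w\rangle=f(y)$. Hence for $f, g \in \mathcal{M}(G_N^\perp)$ and $x \in G_N$,
\[
R_x(f\cdot g) = (R_x f)(R_x g) = f\cdot g,
\]
so Lemma \ref{Strong Peter-Weyl} places $f\cdot g$ back in $\mathcal{M}(G_N^\perp)$. Since $1 \in \mathcal{M}(G_0^\perp) \subseteq \mathcal{M}(G_N^\perp)$, the space $\mathcal{M}(G_N^\perp)$ is a finite-dimensional unital subalgebra of $\mathcal{O}(G)$ containing $V$. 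Consequently $V^k \subseteq \mathcal{M}(G_N^\perp)$ for every $k$, and therefore $V_n = \sum_{k=0}^n V^k \subseteq \mathcal{M}(G_N^\perp)$ for every $n$, giving the uniform bound $\dim(V_n) \le \dim \mathcal{M}(G_N^\perp)$.

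Since this bound is independent of $n$, we immediately obtain $\overline{\lim}_{n\to\infty}\frac{\log \dim(V_n)}{\log n} = 0$, and taking the supremum over $V \in \mathcal{V}$ gives $\mathrm{GKdim}(\mathcal{O}(G)) = 0$. I do not foresee any genuine obstacle: the only substantive step is recognizing that $\mathcal{M}(G_N^\perp)$ is multiplicatively closed, and this is a direct consequence of the right-$G_N$-invariance of its matrix coefficients together with Lemma \ref{Strong Peter-Weyl}, both of which are already in place.
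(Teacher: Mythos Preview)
Your proposal is correct and follows essentially the same approach as the paper: trap $V$ inside the finite-dimensional layer $\mathcal{M}(G_N^\perp)$, show this layer is multiplicatively closed via right-$G_N$-invariance and Lemma~\ref{Strong Peter-Weyl}, and conclude that $\dim(V_n)$ is uniformly bounded. The only cosmetic difference is that you track $V_n$ while the paper tracks $V^k$, and you make the unitality explicit; neither changes the argument.
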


\begin{proof}
Let $V$ be a finite-dimensional subspace of $\mathcal{O}(G)$ with basis $\mathcal{B} = \{f_i : i = 1, \ldots, m\}$. Since $\widehat{G} = \bigcup_{n \in \mathbb{N}_0} G_n^{\perp}$, we have $\mathcal{O}(G) = \operatorname{span}\left\{\bigcup_{n \in \mathbb{N}_0} \mathcal{M}(G_n^{\perp})\right\}$. By Lemma \ref{Filtration of MC}, $\mathcal{M}(G_{n-1}^{\perp}) \subset \mathcal{M}(G_n^{\perp})$ for each $n$, so $\mathcal{O}(G) = \bigcup_{n \in \mathbb{N}_0} \mathcal{M}(G_n^{\perp})$. We may thus choose $N$ such that $f_i \in \mathcal{M}(G_N^{\perp})$ for all $i$, giving $V \subseteq \mathcal{M}(G_N^{\perp})$.

We claim $\mathcal{M}(G_N^{\perp})$ is closed under products. For $g, h \in \mathcal{M}(G_N^{\perp})$, we have $R_x(g) = g$ and $R_x(h) = h$ for each $x \in G_N$, so $R_x(g \cdot h) = g \cdot h$. By Lemma \ref{Strong Peter-Weyl}, then $g \cdot h \in \mathcal{M}(G_N^{\perp})$, proving the claim.

Thus, for each $k \in \mathbb{N}_0$, we have $V^k \subseteq \mathcal{M}(G_N^{\perp})^k \subseteq \mathcal{M}(G_N^{\perp})$, implying
\[
\overline{\lim}_{k \to \infty} \frac{\log \dim(V^k)}{\log k} \leq \overline{\lim}_{k \to \infty} \frac{\log \dim(\mathcal{M}(G_N^{\perp}))}{\log k} = 0.
\]
Therefore, $\mathrm{GKdim}(\mathcal{O}(G)) = 0$.
\end{proof}

\begin{crlre}
Let $G$ be a compact, totally disconnected, and metrizable topological group. Then, one has $$\mathrm{GKdim}(\mathcal{O}(G)) = 0.$$
\end{crlre}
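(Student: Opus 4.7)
The plan is to mirror the case-split strategy used for the corollary on spectral dimension, since the hypothesis on $G$ (compact, totally disconnected, metrizable) is identical and Proposition~\ref{Characterization of CVG} already delivers the crucial reduction.

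First I would split into two cases according to whether $G$ is finite or infinite. In the finite case, I would observe that $\widehat{G}$ is finite and every irreducible representation is finite dimensional, so $\mathcal{O}(G)$ is itself a finite-dimensional algebra. Then for any $V \in \mathcal{V}$ one has $\dim(V_n) \le \dim \mathcal{O}(G) < \infty$ for every $n$, and therefore
\[
\overline{\lim}_{n \to \infty} \frac{\log \dim(V_n)}{\log n} = 0,
\]
giving $\mathrm{GKdim}(\mathcal{O}(G))=0$ immediately. In the infinite case, Proposition~\ref{Characterization of CVG} says that $G$ is a compact Vilenkin group, and so the preceding theorem yields $\mathrm{GKdim}(\mathcal{O}(G))=0$ directly.

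There is no real obstacle here; the two invocations (Proposition~\ref{Characterization of CVG} in the infinite case and the trivial bound in the finite case) combine to give the result in a couple of lines. The only small point worth spelling out, should one wish to be pedantic, is that in the finite case $\mathcal{O}(G) = C(G)$ as a vector space, so its dimension equals $|G|$, which makes the uniform bound $\dim(V_n) \le |G|$ transparent.
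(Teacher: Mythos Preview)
Your proposal is correct and follows exactly the pattern the paper establishes for the analogous corollary on spectral dimension: split into the finite case (where $\mathcal{O}(G)=C(G)$ is finite-dimensional and the bound is trivial) and the infinite case (where Proposition~\ref{Characterization of CVG} reduces to the preceding theorem). The paper itself states this corollary without proof, treating it as immediate, so your write-up is if anything more explicit than what appears there.
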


\brmrk
We proved that the spectral dimension, dimension of the symmetric random walk, and  the $\mathrm{GK}$ dimension of a compact, totally disconnected, metrizable topological group are all $0$. It would be   interesting  to  compute these dimensions for non-metrizable profinite groups. 
\ermrk
\section{$K$-groups}\label{$K$-groups}

A CVG $G$ with an associated sequence of compact open normal subgroups $\langle G_n\rangle_{n\in\mathbb{N}_0}$ can alternatively be identified as a topological group with the inverse limit of the system $\left\{\langle G/G_n\rangle_{n\in\mathbb{N}_0} , \langle \Phi_n\rangle_{n\in\mathbb{N}}\right\}$, where for each $n\in\mathbb{N}$, the transition map $$\Phi_n : G/G_n \rightarrow G/G_{n-1}$$ is given by $\Phi_n(x\bullet G_n) = x\bullet G_{n-1}$ for $x\in G$ (see \rm\cite[Thm 1.1.12]{RibZal-2010aa}). This identification is precisely given by the isomorphism 
$$\Phi : G \rightarrow \varprojlim_{n\in\mathbb{N}_0} (G/G_n);  \quad \Phi(x) = \langle x\bullet G_n\rangle_{n\in\mathbb{N}_0}, \mbox{ for } x\in G.$$  Under this identification $\Phi$, we henceforth write $G= \varprojlim_{n\in\mathbb{N}_0}(G/G_n)$. By the Gelfand-Naimark Theorem, $C(G)$  is then the inductive limit of the induced system $\left\{\langle C(G/G_n)\rangle_{n\in\mathbb{N}_0},\langle\Psi_n\rangle_{n\in\mathbb{N}}\right\}$, where 
$$\Psi_n : C(G/G_{n-1}) \rightarrow C(G/G_n); \quad (\Psi_n f)(x\bullet G_n) = f(x\bullet G_{n-1}) \mbox{  for all } f\in C(G/G_{n-1}) \mbox{  and } x\in G.$$ 
 The induced isomorphism $\Psi: \varinjlim_{n\in\mathbb{N}_0} C(G/G_n) \rightarrow C(G)$ is given by 
 $$\Psi (\langle f_n\rangle_{n\in\mathbb{N}_0})(x) = \lim_{n\to\infty} f_n (x\bullet G_n), \mbox{ for all } \langle f_n\rangle_{n\in\mathbb{N}_0} \in \varinjlim_{n\in\mathbb{N}_0} C(G/G_n) \mbox{ and } x\in G.$$  
Since each $C(G/G_n)$ is a finite-dimensional $C^*$-algebra, $C(G)$ is an approximately finite-dimensional (AF) algebra. Consequently, the identification $C(G) = \varinjlim_{n\in\mathbb{N}_0} C(G/G_n)$ implies that $K_i(C(G)) = \varinjlim_{n\in\mathbb{N}_0} K_i\left(C(G/G_n)\right)$ for $i=0,1$. Since each $G/G_n$ is a finite set, the spectrum of any unitary in $C(G/G_n)$  is not $S^1$, hence it can be connected to identity. This proves that   $K_1\left(C(G/G_n)\right) = 0$, and hence $K_1(C(G)) = 0$.

\begin{ppsn}\label{Generators of K_0}
Let $G$ be a compact Vilenkin group with an associated sequence of compact open normal subgroups $\langle G_n\rangle_{n \in \mathbb{N}_0}$. Then the group $K_0(C(G))$ is generated by the equivalence classes of the continuous functions $\mathbbm{1}_{x \bullet G_r}$ for all $r \in \mathbb{N}_0$ and $[x] \in G / G_r$.
\end{ppsn}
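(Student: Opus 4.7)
The plan is to exploit the AF structure $C(G) = \varinjlim_{n \in \mathbb{N}_0} C(G/G_n)$ that was already established just before the proposition, together with continuity of $K_0$ under inductive limits and the explicit description of $K_0$ for a finite-dimensional commutative $C^*$-algebra.

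First, I would fix $n \in \mathbb{N}_0$ and observe that $G/G_n$ is a finite set, so $C(G/G_n) \cong \bigoplus_{[x] \in G/G_n}\mathbb{C}$. The minimal projections of this algebra are precisely the characteristic functions $\mathbbm{1}_{\{[x]\}}$ for $[x] \in G/G_n$, and together they form a complete set of mutually orthogonal minimal projections summing to the identity. Consequently $K_0(C(G/G_n))$ is the free abelian group generated by the classes $[\mathbbm{1}_{\{[x]\}}]$ for $[x] \in G/G_n$.

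Next, since $K_0$ is continuous with respect to inductive limits of $C^*$-algebras, we have $K_0(C(G)) = \varinjlim_{n \in \mathbb{N}_0} K_0(C(G/G_n))$. Hence every element of $K_0(C(G))$ is the image, under the canonical map $K_0(C(G/G_r)) \to K_0(C(G))$, of some $\mathbb{Z}$-linear combination of the generators $[\mathbbm{1}_{\{[x]\}}]$ for $[x] \in G/G_r$, for a suitable $r \in \mathbb{N}_0$.

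It therefore only remains to identify the image of $\mathbbm{1}_{\{[x]\}} \in C(G/G_r)$ inside $C(G)$ under the isomorphism $\Psi : \varinjlim_{n} C(G/G_n) \to C(G)$. Tracing through the definition of $\Psi$ and the connecting maps $\Psi_n$, the function $\mathbbm{1}_{\{[x]\}}$, viewed in $C(G/G_n)$ for $n \geq r$, is precisely the characteristic function of the union of those cosets of $G_n$ that map to $x \bullet G_r$ under the quotient $G/G_n \to G/G_r$; when pulled back to $G$ this union is exactly the coset $x \bullet G_r$. Thus $\Psi$ sends the class of $\mathbbm{1}_{\{[x]\}}$ to the class of $\mathbbm{1}_{x \bullet G_r} \in C(G)$, which proves the proposition. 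No step is really the main obstacle here; the only mild subtlety is the bookkeeping in this last identification, which is straightforward once one unwinds the inductive limit description of $\Psi$.
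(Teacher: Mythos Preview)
Your proposal is correct and follows essentially the same approach as the paper: both arguments use continuity of $K_0$ under inductive limits, the fact that $K_0(C(G/G_n))$ is freely generated by the classes of the point-mass projections $\mathbbm{1}_{\{[x]\}}$, and then identify the image of such a projection in $C(G)$ under $\Psi$ as $\mathbbm{1}_{x\bullet G_r}$. The paper additionally spells out the explicit formula for the pushed-forward function at level $n$ as a sum over iterated coset representatives, but this is exactly the bookkeeping you describe in your last paragraph.
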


\prf
Consider the generating subset $\mathcal{K}$ of $\varinjlim_{n\in\mathbb{N}_0} K_0\left(C\left(G/G_n\right)\right)$ consisting of elements $\langle f_n\rangle_{n\in\mathbb{N}_0}$ such that there exists $r\in \mathbb{N}_0$ satisfying the recurrence relation $f_n=\Psi_n f_{n-1}$ for all $n>r$ with the initial condition $f_r=\mathbbm{1}_{\{x\bullet G_r\}}$ for some $[x]\in G/G_r$, or equivalently, we can express $f_n$ as
$$f_n = \sum_{y_1\in G_r/G_{r+1}}\sum_{y_2\in G_{r+1}/G_{r+2}}\cdots\sum_{y_n\in G_{n-1}/G_n} \mathbbm{1}_{\{x\bullet y_1\bullet y_2\bullet \cdots \bullet y_n\bullet G_n\}}.$$
Now, using the identification of $\varinjlim_{n\in\mathbb{N}_0} K_0\left(C\left(G/G_n\right)\right)$ with $K_0(C(G))$, we have $$\mathcal{K}=\left\{f\in K_0\left(C(G)\right):f=\mathbbm{1}_{x\bullet G_r} \text{ for } r\in\mathbb{N}_0 \text{ and } [x]\in G/G_r\right\}.$$
\qed\\ 
In what follows, we consider the commutative CVG $\mathbb{Z}_p$ and the noncommutative CVG $\mathbb{H}_d(\mathbb{Z}_p)$, and provide explicit decompositions of the generators of their $K_0$-groups in terms of matrix coefficients.

\subsection{Generators of $K_0\left(C\left(\mathbb{Z}_p\right)\right)$ as a span of characters}
Since $\mathbb{Z}_p$ is commutative, each element of $\widehat{\mathbb{Z}_p}$ corresponds to a one-dimensional unitary representation of $\mathbb{Z}_p$, identified with a character, i.e., a continuous group homomorphism from $\mathbb{Z}_p$ to the circle group $\mathbb{T}$. Conversely, each character of $\mathbb{Z}_p$ corresponds to an element in $\widehat{\mathbb{Z}_p}$. Under this identification, the unitary dual $\widehat{\mathbb{Z}_p}$ forms a group under pointwise multiplication, isomorphic to the Prüfer group $\mathbb{Z}(p^{\infty}) = \{ e^{\frac{2\pi im}{p^n}} : 0 \leq m < p^n, n \in \mathbb{N} \}$. For any character $\chi \in \widehat{\mathbb{Z}_p}$, we have 
$$\chi(1) = e^{\frac{2\pi im}{p^n}} \mbox{ for some } 0 \leq m < p^n \mbox{ and } n \in \mathbb{N}.$$ 
Moreover, we get 
$$\{\chi(1) : \chi \in \widehat{\mathbb{Z}_p}\} = \mathbb{Z}(p^{\infty}).$$ Thus, $\widehat{\mathbb{Z}_p}$ is isomorphic to $\mathbb{Z}(p^{\infty})$ via the map $\chi \mapsto \chi(1)$. Let 
$$S = \{(1,0)\} \cup \{(m,n) \in \mathbb{N} : m < p^n,\, p \nmid m\},$$ which bijectively parametrizes the elements of $\mathbb{Z}(p^{\infty})$. Henceforth, we adopt the notation $\chi_{m,n}$ for all $(m,n)\in S$ to represent the character of $\mathbb{Z}_p$, where $\chi_{m,n}(1)=e^{\frac{2\pi im}{p^n}}$.

We recall from  \rm\cite{Mon-1952aa} a  continuous mapping from $\mathbb{Z}_p$ to the interval $[0,1]$, known as the Monna map. This mapping, denoted as $T:\mathbb{Z}_p\rightarrow [0,1]$, is defined by
\begin{IEEEeqnarray}{rCl}\label{Eq 4.1}
T\left(\sum_{k=0}^\infty x_k p^k\right)=\sum_{k=0}^\infty\frac{x_k}{p^{k+1}},
\end{IEEEeqnarray} 
where $0\leq a_k\leq p-1$. The Monna map $T$ is continuous with respect to the $p$-adic metric on $\mathbb{Z}_p$ and the standard Euclidean metric on $[0,1]$. Furthermore, it preserves measures, considering the Haar measure $\mu$ on $\mathbb{Z}_p$ and the Lebesgue measure $\nu$ on $[0,1]$. Further, note that the set 
$$E=\{a\in [0,1]: a \text{ has multiple base } p \text{ representations}\}$$ is countable and thus has measure zero. Consequently, when we restrict the map $T$ to $\mathbb{Z}_p\setminus T^{-1}[E]$, it becomes a bijection. As a result, the induced map $\widetilde{T}:L^2([0,1])\rightarrow L^2(\mathbb{Z}_p)$ defined as $\widetilde{T}(f):=f\circ T$, where $f\in L^2([0,1])$, is unitary. The following proposition provides the decomposition of the generating elements of $K_0(\mathbb{Z}_p)$ in terms of characters.

\begin{ppsn}\label{Decomposition for p-adic integers}
For any $r\in\mathbb{N}$ and $x\in\mathbb{Z}_p/{p^r\mathbb{Z}_p}$, we have $$\mathbbm{1}_{x+p^r\mathbb{Z}_p}=\sum_{(m,n)\in S,\,n\leq r}\frac{\overline{\left(\chi_{m,n}(1)\right)}^x}{p^r}\chi_{m,n}.$$
\end{ppsn}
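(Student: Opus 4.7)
The plan is to reduce the identity to the standard orthogonality of characters on the finite abelian quotient group $\mathbb{Z}_p/p^r\mathbb{Z}_p \cong \mathbb{Z}/p^r\mathbb{Z}$, followed by Fourier inversion there. The whole argument is a lift of the discrete Fourier transform on a finite cyclic group via the quotient map $\mathbb{Z}_p \to \mathbb{Z}_p/p^r\mathbb{Z}_p$.

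First I would record the key property of the characters $\chi_{m,n}$: being a continuous homomorphism with $\chi_{m,n}(1) = e^{2\pi i m/p^n}$, the character $\chi_{m,n}$ has order $p^n$ and therefore is trivial on $p^n\mathbb{Z}_p$. In particular, $\chi_{m,n}$ descends to a character of $\mathbb{Z}_p/p^r\mathbb{Z}_p$ if and only if $n \leq r$, and we may write $\chi_{m,n}(y) = (\chi_{m,n}(1))^{\widetilde{y}}$ for any integer lift $\widetilde{y}$ of $y$ modulo $p^n$, which makes the notation $\overline{\chi_{m,n}(1)}^x$ unambiguous.

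Next I would show that $\{\chi_{m,n} : (m,n) \in S,\, n \leq r\}$ exhausts the Pontryagin dual $\widehat{\mathbb{Z}_p/p^r\mathbb{Z}_p}$. The characters in this set are distinct (since the $\chi_{m,n}(1)$ are distinct elements of $\mathbb{Z}(p^\infty)$), and a counting argument shows there are
\[
1 + \sum_{n=1}^{r}\bigl(p^{n}-p^{n-1}\bigr) = p^{r}
\]
of them, which coincides with $|\mathbb{Z}_p/p^r\mathbb{Z}_p|$. Since a finite abelian group has a dual of the same order, equality forces the collection to be the full dual.

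Finally I would evaluate the right-hand side pointwise. For any $y \in \mathbb{Z}_p$, using the homomorphism property,
\[
\sum_{\substack{(m,n)\in S \\ n\leq r}} \frac{\overline{\chi_{m,n}(1)}^{x}}{p^{r}}\,\chi_{m,n}(y) \;=\; \frac{1}{p^{r}}\sum_{\chi \in \widehat{\mathbb{Z}_p/p^{r}\mathbb{Z}_p}} \chi(y-x).
\]
Standard character orthogonality on the finite group $\mathbb{Z}_p/p^{r}\mathbb{Z}_p$ shows this sum equals $p^{r}$ when $y - x \in p^{r}\mathbb{Z}_p$ and $0$ otherwise, which is precisely $\mathbbm{1}_{x+p^{r}\mathbb{Z}_p}(y)$. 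There is no serious obstacle here; the only delicate point is the bookkeeping in step two that confirms the index set $\{(m,n)\in S : n\leq r\}$ hits every character of the quotient exactly once.
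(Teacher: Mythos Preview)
Your proof is correct and considerably more direct than the paper's. The paper also begins by observing (via Lemma~\ref{Strong Peter-Weyl}) that $\mathbbm{1}_{x+p^r\mathbb{Z}_p}$ lies in the span of the $\chi_{m,n}$ with $n\leq r$, but then computes each Fourier coefficient $\langle \mathbbm{1}_{x+p^r\mathbb{Z}_p},\chi_{m,n}\rangle$ explicitly by pushing everything through the Monna map $\widetilde{T}^{-1}$ to $L^2([0,1])$ and evaluating a limit of Riemann-type sums there. Your route bypasses the Monna map entirely: you identify the relevant characters with the full Pontryagin dual of the finite cyclic group $\mathbb{Z}_p/p^r\mathbb{Z}_p$ by a counting argument, and then invoke the orthogonality relations on that finite group to read off the formula pointwise. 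This is shorter and conceptually cleaner. The paper's longer computation does have one incidental payoff: the intermediate identity $\int_{p^r\mathbb{Z}_p} e^{-2\pi i\{\gamma c\}_p}\,d\mu(c)=p^{-r}$ obtained along the way is quoted later in the Heisenberg group calculation, so the Monna-map approach feeds into subsequent sections; but for the proposition itself your argument is a strict simplification.
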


\begin{proof}
Let $r\in\mathbb{N}$ and consider the unique representative of $x$ in $\{0,1,\ldots, p^r-1\}$. Since for each $y\in p^r\mathbb{Z}_p$, we have $R_y\left(\mathbbm{1}_{x+p^r\mathbb{Z}_p}\right) = \mathbbm{1}_{x+p^r\mathbb{Z}_p}$, by Lemma \ref{Strong Peter-Weyl}, $\mathbbm{1}_{x + p^r \mathbb{Z}_p}$ lies in the linear span of $\chi_{m, n}$, where $(m, n) \in S$ and $n \leq r$. note that $\widetilde{T}^{-1}\left(\mathbbm{1}_{x+p^r\mathbb{Z}_p}\right)=\mathbbm{1}_{\left[T(x),T(x)+\frac{1}{p^r}\right]}$.

For $(m, n) \in S$ with $n \leq r$, we find
$$\widetilde{T}^{-1}\left(\chi_{m,n}\right)=\lim_{N\rightarrow\infty}\sum_{l_0,l_1,\ldots,l_N=0}^{p-1} \left(\chi_{m,n}(1)\right)^{l_0+l_1p+\cdots+l_Np^N}\mathbbm{1}_{\left[\frac{l_0}{p} + \frac{l_1}{p^2} + \cdots + \frac{l_N}{p^{N+1}}, \frac{l_0}{p} + \frac{l_1}{p^2} +\cdots + \frac{l_N +1}{p^{N+1}}\right)}.$$

The $L^2$-inner product $\langle\mathbbm{1}_{x+p^r\mathbb{Z}_p},\chi_{m,n}\rangle$ simplifies to
\begin{align*}
&\left\langle \mathbbm{1}_{x+p^r\mathbb{Z}_p},\chi_{m,n}\right\rangle\\
&=\left\langle \widetilde{T}^{-1}\left(\mathbbm{1}_{x+p^r\mathbb{Z}_p}\right),\widetilde{T}^{-1}\left(\chi_{m,n}\right)\right\rangle\\
&=\left\langle\mathbbm{1}_{\left[T(x),T(x)+\frac{1}{p^r}\right]}, \lim_{N\rightarrow\infty}\sum_{l_0,l_1,\ldots,l_N=0}^{p-1} \left(\chi_{m,n}(1)\right)^{l_0+l_1p+\cdots+l_Np^N}\mathbbm{1}_{\left[\frac{l_0}{p} + \cdots + \frac{l_N}{p^{N+1}}, \frac{l_0}{p} +\cdots + \frac{l_N +1}{p^{N+1}}\right)}\right\rangle\\
&=\lim_{N\rightarrow\infty}\int_{T(x)}^{T(x)+\frac{1}{p^r}}\sum_{l_0,l_1,\ldots,l_N=0}^{p-1} \overline{\left(\chi_{m,n}(1)\right)}^{l_0+l_1p+\cdots+l_Np^N}\mathbbm{1}_{\left[\frac{l_0}{p} + \cdots + \frac{l_N}{p^{N+1}}, \frac{l_0}{p} +\cdots + \frac{l_N +1}{p^{N+1}}\right)}\, d\nu\\
&=\lim_{N\rightarrow\infty}\sum_{l_r,\ldots,l_N=0}^{p-1}\overline{\left(\chi_{m,n}(1)\right)}^{x + \sum_{k=r}^N l_kp^k}\int_{T(x)}^{T(x)+\frac{1}{p^r}} \mathbbm{1}_{\left[T(x) + \sum_{k=r}^N\frac{l_k}{p^{k+1}}, T(x) + \sum_{k=r}^N\frac{l_k}{p^{k+1}} + \frac{1}{p^{N+1}}\right)}\,d\nu\\
&=\overline{\left(\chi_{m,n}(1)\right)}^x \lim_{N\rightarrow\infty}\frac{1}{p^{N+1}}\sum_{l_r,\ldots, l_N=0}^{p-1}\overline{\left(\chi_{m,n}(1)\right)}^{\sum_{k=r}^N l_kp^k}\\
&=\overline{\left(\chi_{m,n}(1)\right)}^x \lim_{N\rightarrow\infty}\frac{1}{p^{N+1}}\prod_{k=r}^N\left(\sum_{l=0}^{p-1}\overline{\left(\chi_{m,n}(1)\right)}^{lp^k}\right) = \overline{\left(\chi_{m,n}(1)\right)}^x\lim_{N\rightarrow\infty}\frac{p^{N-r+1}}{p^{N+1}}=\frac{\overline{\left(\chi_{m,n}(1)\right)}^x}{p^r},
\end{align*}
and this completes the proof.
\end{proof}

\subsection{Generators of $K_0\left(\mathbb{H}_d\left(\mathbb{Z}_p\right)\right)$ in terms of Matrix coefficients}

\begin{dfn}
Let $R$ be a ring with unity $1$, and $S$ a subring of $R$. For $d \in \mathbb{N}$, the $(2d+1)$-dimensional Heisenberg group over $S$, denoted by $\mathbb{H}_d(S)$, is defined as
\[
\mathbb{H}_d(S) = \left\{[x,y,z]:=
\begin{bmatrix}
  1 & x^t & z \\
  0 & I_d & y \\
  0 & 0 & 1
\end{bmatrix}
: x, y \in S^d, z \in S
\right\},
\]
with group operation defined by matrix multiplication. Here, $I_d$ is the $d \times d$ identity matrix, and $x^t$ denotes the transpose of $x$.
\end{dfn}

For $R = \mathbb{Z}_p$, the $p$-adic integers, $\mathbb{H}_d(\mathbb{Z}_p)$ forms a CVG with a sequence of compact open normal subgroups defined by $G_n := \mathbb{H}_d(p^n \mathbb{Z}_p)$ for $n \in \mathbb{N}_0$. The map
\[
\mathcal{I}_n: \mathbb{H}_d(p^n \mathbb{Z}_p) \to \left( \mathbb{Z}/p \mathbb{Z} \right)^{2d+1},
\]
given by
\[
\mathcal{I}_n\left([x,y,z]\right) = \left( \frac{x(1)}{p^n} + p\mathbb{Z}, \ldots, \frac{x(d)}{p^n} + p\mathbb{Z}, \frac{y(1)}{p^n} + p\mathbb{Z}, \ldots, \frac{y(d)}{p^n} + p\mathbb{Z}, \frac{z}{p^n} + p\mathbb{Z} \right),
\]
for $x = (x(1), \ldots, x(d))^t$, $y = (y(1), \ldots, y(d))^t \in (p^n \mathbb{Z}_p)^d$, and $z \in p^n \mathbb{Z}_p$, is a surjective homomorphism with kernel $\mathbb{H}_d(p^{n+1} \mathbb{Z}_p)$. Hence, it induces an isomorphism:
\[
\mathbb{H}_d(p^n \mathbb{Z}_p) / \mathbb{H}_d(p^{n+1} \mathbb{Z}_p) \cong \left( \mathbb{Z}/p \mathbb{Z} \right)^{2d+1}.
\]
Similarly, we have 
\[
\mathbb{H}_d(\mathbb{Z}_p) / \mathbb{H}_d(p^n \mathbb{Z}_p) \cong \mathbb{H}_d\left( \mathbb{Z}_p / p^n \mathbb{Z}_p \right).
\]

Using the topological isomorphism $\iota: \mathbb{H}_d(\mathbb{Z}_p) \to \mathbb{Z}_p^{2d+1}$ defined by 
\[
\iota([x, y, z]) = (x, y, z), \quad x, y \in \mathbb{Z}_p^d, \; z \in \mathbb{Z}_p,
\]
we identify $\mathbb{H}_d(\mathbb{Z}_p)$ with $\mathbb{Z}_p^{2d+1}$ as topological spaces. The center of $\mathbb{H}_d(\mathbb{Z}_p)$ is given by 
\[
Z(\mathbb{H}_d(\mathbb{Z}_p)) = \{[0, 0, z] : z \in \mathbb{Z}_p\} \cong \mathbb{Z}_p.
\]
The quotient group $\mathbb{H}_d(\mathbb{Z}_p) / Z(\mathbb{H}_d(\mathbb{Z}_p))$ is isomorphic to $\mathbb{Z}_p^{2d}$ via the map 
\[
[x, y, z] \cdot Z(\mathbb{H}_d(\mathbb{Z}_p)) \mapsto (x^t, y^t), \quad [x, y, z] \in \mathbb{H}_d(\mathbb{Z}_p).
\]

Thus, the Haar measure on $Z(\mathbb{H}_d(\mathbb{Z}_p))$ is $\mu$, and the Haar measure on $\mathbb{H}_d(\mathbb{Z}_p) / Z(\mathbb{H}_d(\mathbb{Z}_p))$ is the product measure $\mu^{\otimes 2d}$. By applying the Quotient Integral Formula, the Haar measure on $\mathbb{H}_d(\mathbb{Z}_p)$, via the isomorphism $\iota$, is then the product measure $\mu^{\otimes (2d+1)}$. Hence, similar to the case of $\mathbb{Z}_p$, the map $\mathcal{T}: L^2\left([0,1]^{2d+1}\right) \rightarrow L^2\left(\mathbb{H}_d\left(\mathbb{Z}_p\right)\right)$, defined by
$$\mathcal{T}(f)([x,y,z]) = f\left(T(x(1)), \ldots, T(x(d)), T(y(1)), \ldots, T(y(d)), Tz\right)$$
for $f \in L^2\left([0,1]^{2d+1}\right)$, $x = (x(1), \ldots, x(d))^t$, $y = (y(1), \ldots, y(d))^t \in \mathbb{Z}_p^d$, and $z \in \mathbb{Z}_p$, is unitary.

For \( x := \sum_{k=i}^{\infty} x_k p^k + \mathbb{Z}_p \in \mathbb{Q}_p / \mathbb{Z}_p \), we define
\[
\{x\}_p := \begin{cases} 
\sum_{k=i}^{-1} x_k p^k & \text{if } i \leq -1, \\ 
0 & \text{if } i \geq 0. 
\end{cases}
\]
We denote by \( |\cdot|_p \) the standard \(\ell_{\infty}\)-norm on \( \mathbb{Q}_p^n \) induced by the \( p \)-adic norm on \( \mathbb{Q}_p \). Identifying each equivalence class in $\widehat{\mathbb{Z}_p} \cong \mathbb{Q}_p / \mathbb{Z}_p$ with its associated representative from the complete system of representatives
\[
\{1\} \cup \left\{ \sum_{k=1}^{\infty} \frac{\lambda_k}{p^k} : 0 \leq \lambda_k \leq p-1,\, 1 \leq \sum_{k=1}^{\infty} \lambda_k < \infty \right\},
\]
and considering the following norm on $\widehat{\mathbb{Z}_p}^{2d+1}$ defined by $\|(\alpha,\beta,\gamma)\|_p := \max \{|\alpha|_p,|\beta|_p, |\gamma|_p\}$ for $(\alpha,\beta,\gamma)\in \widehat{\mathbb{Z}_p}^{2d+1}$, we have the following explicit description of the unitary dual of $\mathbb{H}_d(\mathbb{Z}_p)$.

\begin{thm}{\rm\cite{Vel-2024Pre}\label{Representations of pHG}}
The unitary dual of the $(2d+1)$-dimensional Heisenberg group $\mathbb{H}_d(\mathbb{Z}_p)$ can be identified with the set
\[
\left\{ (\alpha,\beta,\gamma) \in \widehat{\mathbb{Z}_p}^{2d+1} : (\alpha, \beta) \in \mathbb{Q}_p^{2d} / |\gamma|_p^{-1} \mathbb{Z}_p^{2d} \right\}
\]
as follows: Each element $(\alpha,\beta,\gamma) \in \widehat{\mathbb{Z}_p}^{2d+1}$, where $(\alpha, \beta) \in \mathbb{Q}_p^{2d} / |\gamma|_p^{-1} \mathbb{Z}_p^{2d}$, corresponds to the representation class $\left[\chi_{(\alpha,\beta,\gamma)}\right] \in \widehat{\mathbb{H}_d(\mathbb{Z}_p)}$, where the representation $\chi_{(\alpha,\beta,\gamma)}$ of $\mathbb{H}_d(\mathbb{Z}_p)$ acts on the finite-dimensional subspace $\mathcal{H}_{\gamma}$ of $L^2(\mathbb{Z}_p^d)$, defined as
\[
\mathcal{H}_{\gamma} = \mathrm{span}_{\mathbb{C}} \left\{ |\gamma|_p^{d/2} \cdot \mathbbm{1}_{k + |\gamma|_p \mathbb{Z}_p^d} : k \in \mathbb{Z}_p^d / |\gamma|_p \mathbb{Z}_p^d \right\},
\]
where $\dim(\mathcal{H}_{\gamma}) = |\gamma|_p^d$, and the action is given by
\[
(\chi_{(\alpha,\beta,\gamma)}[x,y,z]\varphi)(w) = e^{2\pi i \{\alpha x + \beta y + \gamma (z + wy)\}_p} \varphi(w + x), \quad \text{ for }\varphi \in \mathcal{H}_{\gamma} \text{ and } w\in \mathbb{Z}_p^d.
\]
With this identification, and using the basis $\left\{ |\gamma|_p^{d/2} \cdot \mathbbm{1}_{k +|\gamma|_p\mathbb{Z}_p^d} : k \in \mathbb{Z}_p^d / |\gamma|_p \mathbb{Z}_p^d \right\}$ of $\mathcal{H}_{\gamma}$, the matrix coefficients are given by
\[
\left(\chi_{(\alpha,\beta,\gamma)}\right)_{k,k'}([x,y,z]) = e^{2\pi i \left\{ \gamma(z + k'y) + (x\alpha + y\beta) \right\}_p} \mathbbm{1}_{k-k' + |\gamma|_p\mathbb{Z}_p^d}(x), \quad [x,y,z] \in \mathbb{H}_d(\mathbb{Z}_p).
\]
Moreover, 
$$\mathbb{H}_d(p^n\mathbb{Z}_p)^{\perp} = \left\{(\alpha,\beta,\gamma)\in\widehat{\mathbb{Z}_p}^{2d+1} : (\alpha, \beta)\in \mathbb{Q}^{2d}/{|\gamma|_p^{-1}\mathbb{Z}_p^{2d}},\,\|(\alpha,\beta,\gamma)\|_p \leq p^n\right\}, \text{and}$$
$$\mathbb{H}_d(p^n\mathbb{Z}_p)^{\perp} \setminus \mathbb{H}_d(p^{n-1}\mathbb{Z}_p)^{\perp} = \left\{(\alpha,\beta,\gamma)\in \widehat{\mathbb{Z}_p}^{2d+1} : (\alpha,\beta) \in \mathbb{Q}^{2d}/{|\gamma|_p^{-1}\mathbb{Z}_p^{2d}},\, \|(\alpha,\beta,\gamma)\|_p = p^n\right\}.$$
\end{thm}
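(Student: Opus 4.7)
The plan is to apply the Mackey orbit method to the two-step nilpotent profinite group $\mathbb{H}_d(\mathbb{Z}_p)$ by passing through its finite congruence quotients. By Lemma~\ref{Lemma 3.3} applied with $G_n = \mathbb{H}_d(p^n\mathbb{Z}_p)$, every irreducible unitary representation factors through $\mathbb{H}_d(\mathbb{Z}_p)/\mathbb{H}_d(p^n\mathbb{Z}_p) \cong \mathbb{H}_d(\mathbb{Z}/p^n\mathbb{Z})$ for some $n\in\mathbb{N}_0$. Thus it suffices to classify the unitary dual of each finite Heisenberg group and then reassemble the classification in a $p$-adic parametrization indexed by $\widehat{\mathbb{Z}_p}\cong \mathbb{Q}_p/\mathbb{Z}_p$.

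First, identify the center $Z = \{[0,0,z] : z\in \mathbb{Z}_p\} \cong \mathbb{Z}_p$ and apply Schur's lemma to attach a central character $\gamma \in \widehat{Z}$ to each irreducible representation. If $\gamma$ is trivial, the representation descends to the abelianization $\mathbb{H}_d(\mathbb{Z}_p)/Z \cong \mathbb{Z}_p^{2d}$ and hence is one-dimensional of the form $[x,y,z] \mapsto e^{2\pi i \{\alpha x + \beta y\}_p}$ for some $(\alpha,\beta)\in \widehat{\mathbb{Z}_p}^{2d}$. If $\gamma$ is non-trivial with $|\gamma|_p = p^n$, take the polarizing subgroup $P = \{[0,y,z] : y\in\mathbb{Z}_p^d,\, z\in\mathbb{Z}_p\}$ (a maximal abelian subgroup containing $Z$) and induce the character $\eta_{(\beta,\gamma)}[0,y,z] = e^{2\pi i \{\beta y + \gamma z\}_p}$ from $P$ to $\mathbb{H}_d(\mathbb{Z}_p)$. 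Choosing coset representatives $\{[k,0,0] : k\in \mathbb{Z}_p^d/|\gamma|_p\mathbb{Z}_p^d\}$ realises the induced representation on the $|\gamma|_p^d$-dimensional space $\mathcal{H}_\gamma$; twisting with the one-dimensional character associated to $\alpha\in\widehat{\mathbb{Z}_p}^d$ yields the general $\chi_{(\alpha,\beta,\gamma)}$, and the twist ambiguity reduces $(\alpha,\beta)$ to the stated quotient $\mathbb{Q}_p^{2d}/|\gamma|_p^{-1}\mathbb{Z}_p^{2d}$.

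Irreducibility follows from a Stone--von Neumann argument adapted to the finite Heisenberg groups: the commutation relation $\chi_{(\alpha,\beta,\gamma)}[x,0,0]\chi_{(\alpha,\beta,\gamma)}[0,y,0] = e^{2\pi i \{\gamma x y\}_p}\chi_{(\alpha,\beta,\gamma)}[0,y,0]\chi_{(\alpha,\beta,\gamma)}[x,0,0]$ is non-degenerate on $\mathbb{Z}_p^d/|\gamma|_p\mathbb{Z}_p^d$, so every intertwiner is scalar. Inequivalence across the parameter set is verified by first separating the central character $\gamma$ and then the character of $P$ obtained by restriction. Exhaustion is a squared-dimension count: summing $\sum (\dim \chi_{(\alpha,\beta,\gamma)})^2$ over parameters with $\|(\alpha,\beta,\gamma)\|_p \leq p^n$ should yield $|\mathbb{H}_d(\mathbb{Z}/p^n\mathbb{Z})| = p^{n(2d+1)}$; grouping by the value $|\gamma|_p = p^k$ with $0\leq k\leq n$ reduces this to a geometric sum. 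Once the basis $\{|\gamma|_p^{d/2}\mathbbm{1}_{k+|\gamma|_p\mathbb{Z}_p^d}\}$ is fixed, the matrix coefficients are read off directly from the defining formula using the orthogonality of characteristic functions on disjoint cosets. The characterisation of $\mathbb{H}_d(p^n\mathbb{Z}_p)^{\perp}$ then reduces to requiring that the formula for $\chi_{(\alpha,\beta,\gamma)}$ gives the identity when $[x,y,z]$ lies in $\mathbb{H}_d(p^n\mathbb{Z}_p)$, which is equivalent to the simultaneous valuation bounds $|\alpha|_p, |\beta|_p, |\gamma|_p \leq p^n$.

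The main technical obstacle will be organizing the exhaustion step coherently with the $p$-adic parametrization: one must verify that the count of admissible $(\alpha,\beta)$ in $\mathbb{Q}_p^{2d}/|\gamma|_p^{-1}\mathbb{Z}_p^{2d}$ with $\|(\alpha,\beta,\gamma)\|_p\leq p^n$, combined with the dimension $|\gamma|_p^{d}=p^{kd}$ and weighted by the squared-dimension, telescopes over $0\leq k\leq n$ to exactly $p^{n(2d+1)}$. A related delicacy is the choice of complete system of representatives for $\mathbb{Q}_p^{2d}/|\gamma|_p^{-1}\mathbb{Z}_p^{2d}$ used in the fractional part $\{\cdot\}_p$, which governs the well-definedness of the exponential $e^{2\pi i \{\alpha x + \beta y + \gamma(z+wy)\}_p}$ and must be kept compatible with the representatives chosen in the pre-theorem discussion.
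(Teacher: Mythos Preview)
The paper does not give its own proof of this theorem: it is stated with a citation to \cite{Vel-2024Pre} and used as input for the subsequent computation of the $K_0$-generators, so there is nothing in the paper to compare your argument against. That said, your outline is the standard and correct route to this classification. Reducing to the finite quotients $\mathbb{H}_d(\mathbb{Z}/p^n\mathbb{Z})$ via Lemma~\ref{Lemma 3.3}, extracting the central character $\gamma$, inducing from the maximal abelian normal subgroup $P=\{[0,y,z]\}$, and invoking the finite Stone--von Neumann theorem is exactly how one proves this; the exhaustion count you sketch does telescope to $p^{n(2d+1)}$ once you observe that for $|\gamma|_p=p^k$ there are $p^k-p^{k-1}$ choices of $\gamma$ (for $k\geq 1$), $p^{2(n-k)d}$ admissible classes of $(\alpha,\beta)$, and squared dimension $p^{2kd}$. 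One small point to tighten: your phrase ``twisting with the one-dimensional character associated to $\alpha$'' works because $[x,y,z]\mapsto e^{2\pi i\{\alpha x\}_p}$ is a genuine character of the whole group (the $x$-coordinate is additive under the Heisenberg multiplication), but you should also explain concretely why shifting $(\alpha,\beta)$ by an element of $|\gamma|_p^{-1}\mathbb{Z}_p^{2d}/\mathbb{Z}_p^{2d}$ gives an equivalent representation --- this is the intertwiner that permutes the basis vectors $\mathbbm{1}_{k+|\gamma|_p\mathbb{Z}_p^d}$ by a translation in $k$, and it is what pins down the quotient in the parameter space.
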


For \( \gamma \in \widehat{\mathbb{Z}_p} \), let \( P_\gamma : \mathbb{Z}_p^d \to \mathbb{Z}_p^d / |\gamma|_p \mathbb{Z}_p^d \) denote the natural quotient map. For simplicity, we denote the vector \( (\alpha, \beta, \gamma) \) by \( \mathbf{\zeta} \). The following proposition expresses the generators of \( K_0\big( \mathbb{H}_d(\mathbb{Z}_p) \big) \) in terms of matrix coefficients.

\begin{ppsn}
For any \( r \in \mathbb{N} \) and \( \boldsymbol{v} := [x, y, z] \in \mathbb{H}_d\left( \mathbb{Z}_p \right) \), we have
\[
\mathbbm{1}_{\boldsymbol{v} \cdot \mathbb{H}_d\left( p^r \mathbb{Z}_p \right)} = \sum_{\substack{\gamma \in \mathbb{Q}_p / \mathbb{Z}_p \\ |\gamma|_p \leq p^r}} \left(\sum_{\substack{(\alpha, \beta) \in \mathbb{Q}_p^{2d} / |\gamma|_p^{-1} \mathbb{Z}_p^{2d} \\ |\alpha|_p, |\beta|_p \leq p^r}} \left(\sum_{k \in \mathbb{Z}_p^d / |\gamma|_p \mathbb{Z}_p^d} \frac{|\gamma|_p^d \cdot e^{-2\pi i \{ \gamma(z + ky) + x \alpha + y \beta \}_p}}{p^{r(2d+1)}} \left( \chi_{\mathbf{\zeta}} \right)_{k + P_{\gamma}(x), k}\right)\right).
\]
\end{ppsn}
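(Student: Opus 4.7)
The plan is to expand $\mathbbm{1}_{\boldsymbol{v}\cdot\mathbb{H}_d(p^r\mathbb{Z}_p)}$ in the orthogonal basis of $L^2(\mathbb{H}_d(\mathbb{Z}_p))$ given by the matrix coefficients from Theorem~\ref{Representations of pHG}, and to read off each coefficient by a direct inner-product computation. First I would invoke Lemma~\ref{Strong Peter-Weyl}: the function is $R_w$-invariant for every $w\in\mathbb{H}_d(p^r\mathbb{Z}_p)$, hence it lies in $\mathcal{M}(\mathbb{H}_d(p^r\mathbb{Z}_p)^\perp)$, which by Theorem~\ref{Representations of pHG} is the span of the $(\chi_{\mathbf{\zeta}})_{k',k}$ for $\|\mathbf{\zeta}\|_p\le p^r$ and $k,k'\in\mathbb{Z}_p^d/|\gamma|_p\mathbb{Z}_p^d$. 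Schur orthogonality gives $\|(\chi_{\mathbf{\zeta}})_{k',k}\|_{L^2}^2=|\gamma|_p^{-d}$, so the coefficient of $(\chi_{\mathbf{\zeta}})_{k',k}$ will be $|\gamma|_p^{d}\bigl\langle\mathbbm{1}_{\boldsymbol{v}\cdot\mathbb{H}_d(p^r\mathbb{Z}_p)},(\chi_{\mathbf{\zeta}})_{k',k}\bigr\rangle$, and the whole proof reduces to computing this pairing and identifying the unique pair $(k',k)$ on which it is nonzero.

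To evaluate the pairing, I would write $\boldsymbol{v}=[x,y,z]$ and transport the coset to $\mathbb{Z}_p^{2d+1}$ via $\iota$, under which Haar measure becomes $\mu^{\otimes(2d+1)}$. A quick check shows that since the twist $x^t(Y-y)$ already lies in $p^r\mathbb{Z}_p$ for $Y\in y+p^r\mathbb{Z}_p^d$, the coset is actually the product $(x+p^r\mathbb{Z}_p^d)\times(y+p^r\mathbb{Z}_p^d)\times(z+p^r\mathbb{Z}_p)$. In the explicit formula for $(\chi_{\mathbf{\zeta}})_{k',k}$ the factor $\mathbbm{1}_{k'-k+|\gamma|_p\mathbb{Z}_p^d}(X)$ is constant on $x+p^r\mathbb{Z}_p^d$ (because $|\gamma|_p\le p^r$) and equals $1$ precisely when $k'\equiv k+P_\gamma(x)\pmod{|\gamma|_p\mathbb{Z}_p^d}$, which singles out $k'=k+P_\gamma(x)$. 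The remaining exponential factor $e^{-2\pi i\{\gamma(Z+kY)+X\alpha+Y\beta\}_p}$ is additive in $X$, $Y$, $Z$ thanks to the additivity of $\{\cdot\}_p$, so the inner product splits into three independent one-variable integrals of $p$-adic additive characters over cosets of $p^r\mathbb{Z}_p^d$, $p^r\mathbb{Z}_p^d$, and $p^r\mathbb{Z}_p$ respectively.

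Each of those three integrals collapses using the standard fact that $\{\theta u\}_p=0$ whenever $|\theta|_p\,|u|_p\le 1$: under the bounds $|\alpha|_p,|\beta|_p,|\gamma|_p\le p^r$ together with $|k|_p\le 1$, the elements $\alpha$, $\gamma k+\beta$, and $\gamma$ all trivialize on $p^r\mathbb{Z}_p^d$ (resp.\ $p^r\mathbb{Z}_p$). Hence each character is constant on its coset, equals its value at the base point $x$, $y$, $z$, and contributes a volume factor of $p^{-rd}$, $p^{-rd}$, or $p^{-r}$. Multiplying the three contributions gives $p^{-r(2d+1)}e^{-2\pi i\{\gamma(z+ky)+x\alpha+y\beta\}_p}$; scaling by $|\gamma|_p^d$ and summing over the admissible triples $(\gamma,\alpha,\beta,k)$ then reproduces exactly the right-hand side of the stated identity. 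The principal obstacle is not an idea but careful bookkeeping: one must check that the index set of Theorem~\ref{Representations of pHG} parametrizes the Peter--Weyl block $\mathcal{M}(\mathbb{H}_d(p^r\mathbb{Z}_p)^\perp)$ without double counting, that $k'=k+P_\gamma(x)$ is the unique surviving first index modulo $|\gamma|_p\mathbb{Z}_p^d$, and that the bound $|\gamma k+\beta|_p\le p^r$ really forces the $Y$-integral to freeze at $y$, which is what makes the phase in the final answer the clean expression $\{\gamma(z+ky)+x\alpha+y\beta\}_p$.
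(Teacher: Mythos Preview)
Your proposal is correct and follows essentially the same route as the paper: expand $\mathbbm{1}_{\boldsymbol{v}\cdot\mathbb{H}_d(p^r\mathbb{Z}_p)}$ in the Peter--Weyl basis via Lemma~\ref{Strong Peter-Weyl} and Theorem~\ref{Representations of pHG}, compute $\|(\chi_{\mathbf{\zeta}})_{k',k}\|^2=|\gamma|_p^{-d}$, and evaluate the inner product by splitting into three one-variable integrals that each freeze to their base point because $|\alpha|_p,|\beta|_p,|\gamma|_p\le p^r$. The one difference is that you observe up front that the coset is, under $\iota$, the product set $(x+p^r\mathbb{Z}_p^d)\times(y+p^r\mathbb{Z}_p^d)\times(z+p^r\mathbb{Z}_p)$, which lets you bypass the paper's substitution $[a,b,c]=[x,y,z]\cdot[a_1,b_1,c_1]$ and its appeal to the Quotient Integral Formula---a mild streamlining, not a different idea.
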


\begin{proof}
The function \( \mathbbm{1}_{[x, y, z] \cdot \mathbb{H}_d\left( p^r \mathbb{Z}_p \right)} \in L^2\left( \mathbb{H}_d\left( \mathbb{Z}_p \right) \right) \) is invariant under the right regular action of the subgroup \( \mathbb{H}_d\left( p^r \mathbb{Z}_p \right) \). By Lemma \ref{Strong Peter-Weyl} and Theorem \ref{Representations of pHG}, it then follows that
$$\mathbbm{1}_{[x,y,z]\cdot \mathbb{H}_d\left(p^r\mathbb{Z}_p\right)} = \sum_{(\alpha,\beta,\gamma)\in\mathbb{H}_d\left(p^r\mathbb{Z}_p\right)^{\perp}} \left(\sum_{k,k'\in \mathbb{Z}_p^d/{|\gamma|_p \mathbb{Z}_p^d}} \frac{\langle \mathbbm{1}_{[x,y,z]\cdot \mathbb{H}_d\left(p^r\mathbb{Z}_p\right)} , \left(\chi_{(\alpha,\beta,\gamma)}\right)_{kk'}\rangle}{\|\left(\chi_{(\alpha,\beta,\gamma)}\right)_{kk'}\|^2} \left(\chi_{(\alpha,\beta,\gamma)}\right)_{kk'}\right)$$

Let $(\alpha,\beta,\gamma)\in\mathbb{H}_d\left(p^r\mathbb{Z}_p\right)^{\perp}$, i.e., $(\alpha,\beta)\in\mathbb{Q}_p^{2d}/{|\gamma|_p^{-1}\mathbb{Z}_p^{2d}}$, $\gamma\in\mathbb{Q}_p/{\mathbb{Z}_p}$, and $\|\left(\alpha,\beta,\gamma\right)\|_p\leq p^r$. Let $k,k'\in \mathbb{Z}_p^d/{|\gamma|_p\mathbb{Z}_p^d}$.

The \( L^2 \)-norm of the matrix coefficient \( \left( \chi_{(\alpha, \beta, \gamma)} \right)_{k k'} \) is
\begin{align*}
\|\left(\chi_{(\alpha,\beta,\gamma)}\right)_{kk'}\|^2 &= \int \mathbbm{1}_{k-k' + |\gamma|_p\mathbb{Z}_p^d}(a)\, d\mu^{\otimes (2d+1)}(a,b,c)\\
&= \int \mathbbm{1}_{k-k'+|\gamma|_p\mathbb{Z}_p^d}(a)\, d\mu^{\otimes d}(a) = \mu^{\otimes d}\left(k-k' + |\gamma|_p\mathbb{Z}_p^d\right) = \frac{1}{|\gamma|_p^d}.
\end{align*}

Now, we compute the inner products as follows:
\begin{align*}
I &:=\langle \mathbbm{1}_{[x,y,z]\cdot \mathbb{H}_d\left(p^r\mathbb{Z}_p\right)} , \left(\chi_{(\alpha,\beta,\gamma)}\right)_{kk'}\rangle\\
&=\int_{[x,y,z]\cdot \mathbb{H}_d\left(p^r\mathbb{Z}_p\right)} e^{-2\pi i\{\gamma(c+k' b) + a\alpha + b\beta\}_p}\cdot \mathbbm{1}_{k-k' + |\gamma|_p\mathbb{Z}_p^d}(a)\,d\mu^{\otimes (2d+1)}(a,b,c).
\end{align*}
Using the transformation $[a,b,c] = [x,y,z]\cdot [a_1,b_1,c_1]$, we get
\begin{align*}
I= \int_{\mathbb{H}_d\left(p^r\mathbb{Z}_p\right)} e^{-2\pi i \{\gamma\left(c_1 + x^t b_1 +z + k'(b_1 + y)\right) + (a_1 +x)\alpha + (b_1 +y)\beta\}_p}\cdot \mathbbm{1}_{k-k' + |\gamma|_p\mathbb{Z}_p^d}(a_1 + x)\, d\mu^{\otimes (2d+1)}(a_1,b_1,c_1).
\end{align*}
Replacing the variable notation $(a_1,b_1,c_1)$ by $(a,b,c)$ and applying the Quotient Integral Formula, we get
\begin{align*}
I &= \int_{(a,b)\in p^r\mathbb{Z}_p^{2d}}\int_{c\in p^r\mathbb{Z}_p} e^{-2\pi i\{\gamma\left(c + x^t b + z + k'(b+y)\right) + (a+x)\alpha + (b+y)\beta\}_p}\cdot \mathbbm{1}_{-x+k-k' + |\gamma|_p\mathbb{Z}_p^d}(a)\,d\mu(c)\,d\mu^{\otimes (2d)}(a,b).
\end{align*}
If $\chi_{\gamma}$ denotes the character on $\mathbb{Z}_p$ with $\chi_{\gamma}(1) = e^{2\pi i \gamma}$, it is then shown in the proof of Proposition \ref{Decomposition for p-adic integers} that 
\begin{equation}\label{eq1}
\int_{p^r\mathbb{Z}_p} e^{-2\pi i\{\gamma c\}_p}\, d\mu (c) = \langle \mathbbm{1}_{p^r\mathbb{Z}_p} , \chi_{\gamma}\rangle = \frac{1}{p^r},
\end{equation}
and this gives
\begin{IEEEeqnarray*}{lCl}
I = \frac{1}{p^r}\int_{p^r\mathbb{Z}_p^{2d}} e^{-2\pi i \{\gamma\left(x^t b + z + k'(b+y)\right) + (a+x)\alpha + (b+y)\beta\}_p}\cdot \mathbbm{1}_{-x + k-k' + |\gamma|_p\mathbb{Z}_p^d}(a)\, d\mu^{\otimes (2d)}(a,b)\\
= \frac{e^{-2\pi i\{\gamma(z+k'y) + x\alpha + y\beta\}_p}}{p^r} \int_{p^r\mathbb{Z}_p^{2d}} e^{-2\pi i\{\gamma (x^t + k')b + a\alpha + b\beta\}_p} \cdot \mathbbm{1}_{-x+k-k'+|\gamma|_p \mathbb{Z}_p^d}(a)\, d\mu^{\otimes (2d)}(a,b)\\
= \frac{e^{-2\pi i\{\gamma(z+k'y) + x\alpha + y\beta\}_p}}{p^r} \int_{p^r\mathbb{Z}_p^d} e^{-2\pi i\{a\alpha\}_p} \cdot \mathbbm{1}_{-x+k-k'+|\gamma|_p \mathbb{Z}_p^d}(a) \int_{p^r\mathbb{Z}_p^d} e^{-2\pi i\{\left(\gamma (x^t + k') + \beta\right)b\}_p}\, d\mu^{\otimes d}(b)\, d\mu^{\otimes d}(a).
\end{IEEEeqnarray*}
Using equation (\ref{eq1}) on each co-ordinate of $p^r\mathbb{Z}_p^d$, it then follows that $$\int_{p^r\mathbb{Z}_p^d} e^{-2\pi i\{\left(\gamma (x^t + k') + \beta\right)b\}_p}\, d\mu^{\otimes d}(b) = \frac{1}{p^{rd}}.$$ 
Hence we have
\begin{align*}
I &= \frac{e^{-2\pi i\{\gamma(z+k'y) + x\alpha + y\beta\}_p}}{p^{r(d+1)}} \int_{p^r\mathbb{Z}_p^d} e^{-2\pi i\{a\alpha\}_p} \cdot \mathbbm{1}_{-x+k-k'+|\gamma|_p \mathbb{Z}_p^d}(a)\, d\mu^{\otimes d}(a)\\
&= \frac{e^{-2\pi i\{\gamma(z+k'y) + x\alpha + y\beta\}_p}}{p^{r(d+1)}} \int_{p^r\mathbb{Z}_p^d\cap\left(-x+k-k'+|\gamma|_p\mathbb{Z}_p^d\right)}e^{-2\pi i\{a\alpha\}_p}\, d\mu^{\otimes d}(a).
\end{align*}
If $k\neq P_{\gamma}(x) + k'$, then $p^r\mathbb{Z}_p^d\cap\left(-x+k-k'+|\gamma|_p\mathbb{Z}_p^d\right)$ is empty, and hence $I=0$. On the other hand, if $k= P_{\gamma}(x) + k'$, then $p^r\mathbb{Z}_p^d\cap\left(-x+k-k'+|\gamma|_p\mathbb{Z}_p^d\right) = p^r\mathbb{Z}_p^d$, giving us 
$$I=\frac{e^{-2\pi i\{\gamma(z+k'y) + x\alpha + y\beta\}_p}}{p^{r(d+1)}} \int_{p^r\mathbb{Z}_p^d}e^{-2\pi i\{a\alpha\}_p}\, d\mu^{\otimes d}(a) = \frac{e^{-2\pi i\{\gamma(z+k'y) + x\alpha + y\beta\}_p}}{p^{r(2d+1)}}.$$ 
This leads to the decomposition as stated.
\end{proof}

\section{On Spectral triples}\label{Spectral triple}
 A well known example of nontrivial even spectral triple is the following. Consider the Toeplitz algebra $\scrt$, which is the $^*$-closed subalgebra of $\mathcal{L}\left(\ell^2\left(\mathbb{N}_0\right)\right)$ generated by by the unilateral shift $S$ defined by $Se_n = e_{n+1}$, $n\in\mathbb{N}_0$. We take $\mathcal{H} = \ell^2\left(\mathbb{N}_0\right)\oplus \ell^2\left(\mathbb{N}_0\right)$, which is naturally $\mathbb{Z}/2$-graded, with grading operator $\Gamma = \begin{pmatrix}
1 & 0 \\
0 & -1
\end{pmatrix}$. We consider the Dirac operator $\begin{pmatrix}
0 & D \\
D^* & 0
\end{pmatrix}$, where $D$ is the weighted shift operator defined by $De_n = (n+1)e_{n+1}$, $n\in\mathbb{N}_0$. Then, the corresponding spectral triple is a nontrivial spectral triple. One can construct similar nontrivial spectral triples, where the corresponding Dirac operators are weighted shift on the respective Hilbert space on the noncommutative torus $\mathcal{A}_{\theta}$, on the Quantum Disk $\mathcal{D}_q$, and on quantum $3$-sphere $S_{\theta}^3$. In this section, we show the non-existence of a general class of such even Spectral triples on $\mathbb{Z}_p$.
We begin with the following characterization of an equivariant even spectral triple for the $^*$-subalgebra $\mathcal{O}(G)$ of $C(G)$, where $G$ is a compact abelian group.

\begin{ppsn} \label{Equivariance}
Let $G$ be a compact abelian group, and let $U$ be the left regular action of $G$ on $L^2(G)$. Let
$$
\left(\mathcal{O}(G), \pi\oplus\pi,  
\begin{pmatrix}
0 & D \\
D^* & 0
\end{pmatrix}, \begin{pmatrix}
1 & 0 \\
0 & -1
\end{pmatrix}
\right)
$$ 
be an equivariant even spectral triple of $C(G)$, where $\pi$ is the GNS representation of $\mathcal{O}(G)$ on $L^2(G)$. Let $\{\chi_n : n\in\mathbb{N}_0\}$ be the set of characters of $G$. Then, for each $n \in \mathbb{N}_0$, there exists $\lambda_n \in \mathbb{C}$ such that $D\chi_n = \lambda_n \chi_n$.
\end{ppsn}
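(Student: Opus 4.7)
The plan is to extract a commutation relation from equivariance and then invoke the joint spectral decomposition of the left regular representation on $L^2(G)$. First I would unpack what equivariance says in concrete terms. On $L^2(G)\oplus L^2(G)$ the natural ambient unitary for the covariant representation is the diagonal amplification $u_x := U_x \oplus U_x$, and equivariance of the Dirac operator $\mathcal{D} := \begin{pmatrix} 0 & D \\ D^* & 0 \end{pmatrix}$ means that $\mathcal{D}$ commutes with $u_x$ for every $x \in G$. A block computation then immediately yields
$$D U_x = U_x D \quad \text{and} \quad D^* U_x = U_x D^*$$
for all $x \in G$.

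Next I would recall the joint spectral structure of $\{U_x\}_{x\in G}$ on $L^2(G)$. Since $G$ is compact abelian, each $\chi_n$ is a character $G \to \mathbb{T}$, and
$$(U_x \chi_n)(y) = \chi_n(x^{-1}y) = \overline{\chi_n(x)}\, \chi_n(y),$$
so each $\chi_n$ is a simultaneous eigenvector of the family $\{U_x\}_{x\in G}$ with joint eigenvalue $x \mapsto \overline{\chi_n(x)}$. By Peter-Weyl, $\{\chi_n\}_{n\in\mathbb{N}_0}$ is an orthonormal basis of $L^2(G)$, and since distinct characters yield distinct joint eigenvalue functions on $G$, the joint eigenspace associated with $\overline{\chi_n}$ is precisely the one-dimensional line $\mathbb{C}\chi_n$.

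The conclusion is then immediate: since $D$ commutes with every $U_x$, for every $n$ and every $x\in G$ we have
$$U_x(D\chi_n) = D(U_x\chi_n) = \overline{\chi_n(x)}\,(D\chi_n),$$
which places $D\chi_n$ in the one-dimensional joint eigenspace $\mathbb{C}\chi_n$, forcing $D\chi_n = \lambda_n \chi_n$ for some $\lambda_n \in \mathbb{C}$. The only technical point is to verify $\chi_n \in \mathrm{Dom}(D)$, but this follows from the spectral triple axioms: $\chi_n \in \mathcal{O}(G)$ sits inside the smooth domain of $\mathcal{D}$ since the commutators $[\mathcal{D},(\pi\oplus\pi)(a)]$ are required to be bounded for $a\in \mathcal{O}(G)$. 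I do not foresee a real obstacle; the whole argument amounts to the observation that in the abelian setting the joint spectrum of the left regular representation is simple and indexed by $\widehat{G}$, so any closed operator commuting with all $U_x$ must be diagonal in the character basis.
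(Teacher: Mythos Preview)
Your argument is correct and follows essentially the same approach as the paper: both extract the commutation $U_x D = D U_x$ from equivariance and then use that distinct characters $\chi_m\neq\chi_n$ are separated by some $x\in G$ to force $D\chi_n\in\mathbb{C}\chi_n$. The only cosmetic difference is that the paper spells this out by comparing Fourier coefficients $\langle D\chi_n,\chi_m\rangle$ on both sides of $U_x^*DU_x\chi_n=D\chi_n$, whereas you phrase it in terms of one-dimensional joint eigenspaces; these are the same computation.
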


\begin{proof}
Equivariance of the spectral triple implies that for all $x\in G$, $U_x^* D U_x = D$. Fix $n\in\mathbb{N}_0$. Then
\begin{align*}
U_x^*DU_x\chi_n = U_x^* D\left(\overline{\chi_n(x)}\chi_n\right) &= \overline{\chi_n(x)} U_x^*\left(\sum_{m\in\mathbb{N}_0}\left\langle D\chi_n,\chi_m\right\rangle\chi_m\right)\\
&= \overline{\chi_n(x)}\sum_{m\in\mathbb{N}_0}\left\langle D\chi_n,\chi_m\right\rangle\chi_m(x)\chi_m.
\end{align*}
For each $m\in\mathbb{N}_0$, equating coefficients of $\chi_m$ in $U_x^* D U_x \chi_n$ and $D\chi_n$ gives
\begin{IEEEeqnarray}{rCl} \label{equn:equivariance}
\overline{\chi_n(x)}\chi_m(x) \langle D\chi_n, \chi_m \rangle = \langle D\chi_n, \chi_m \rangle.
\end{IEEEeqnarray}
For $m\neq n$, choosing $x\in G$ such that $\chi_m(x) \neq \chi_n(x)$ forces $\langle D\chi_n, \chi_m \rangle = 0$. Hence,
$$
D\chi_n = \langle D\chi_n, \chi_n \rangle \chi_n,
$$
where setting $\lambda_n = \langle D\chi_n, \chi_n \rangle$ completes the proof.
\end{proof}

The following lemma provides a natural basis of $qL^2\left(\mathbb{Z}_p\right)$, where $[q]$ is a generator of $K_0\left(C\left(\mathbb{Z}_p\right)\right)$ as described in Proposition \ref{Generators of K_0}.

\begin{lmma}\label{Basis of subspace}
Let $r \in \mathbb{N}_0$ and $x \in \mathbb{Z}_p / p^r \mathbb{Z}_p$. Then the set $\left\{ \psi_{m,n} : (m,n) \in S \right\}$, where
$$
\psi_{m,n}(z) =
\begin{cases}
p^{r/2} \chi_{m,n}\left( \dfrac{z - x}{p^r} \right), & \text{if } z \in x + p^r \mathbb{Z}_p, \\
0, & \text{otherwise},
\end{cases}
$$
is an orthonormal basis of the subspace $\mathbbm{1}_{x + p^r \mathbb{Z}_p} \cdot L^2\left(\mathbb{Z}_p\right)$. Moreover, for each $(m,n) \in S$, we have
$$\psi_{m,n} = \begin{cases}
p^{-r/2}\displaystyle\sum_{s\leq r} \overline{\chi_{l,s}(x)}\cdot\chi_{l,s}, &\text{ if } (m,n)=(1,0)\\
p^{-r/2}\displaystyle\sum_{l\equiv m \mod p^n}  \overline{\chi_{l,n+r}(x)}\chi_{l,n+r}, &\text{ otherwise}. 
\end{cases}$$
\end{lmma}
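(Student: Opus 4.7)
The plan is to treat the two assertions separately: the orthonormality/spanning statement is essentially a rescaled pullback, while the explicit character expansion is the substantive part and will be verified pointwise.

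For the first assertion, I would observe that the map $\Phi_{x,r}\colon x+p^r\mathbb{Z}_p \to \mathbb{Z}_p$, $z\mapsto (z-x)/p^r$, is a bijection that rescales Haar measure by the factor $\mu(x+p^r\mathbb{Z}_p)=p^{-r}$. Consequently, the linear map
\[
W\colon L^2(\mathbb{Z}_p)\longrightarrow \mathbbm{1}_{x+p^r\mathbb{Z}_p}\cdot L^2(\mathbb{Z}_p),\qquad W(f)=p^{r/2}\,(f\circ \Phi_{x,r})\cdot \mathbbm{1}_{x+p^r\mathbb{Z}_p},
\]
is unitary, and by construction $W(\chi_{m,n})=\psi_{m,n}$. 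Since $\{\chi_{m,n}:(m,n)\in S\}$ is an orthonormal basis of $L^2(\mathbb{Z}_p)$ by Peter–Weyl, $\{\psi_{m,n}:(m,n)\in S\}$ is an orthonormal basis of $\mathbbm{1}_{x+p^r\mathbb{Z}_p}\cdot L^2(\mathbb{Z}_p)$.

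For the explicit decomposition, the case $(m,n)=(1,0)$ is immediate: here $\chi_{1,0}\equiv 1$, so $\psi_{1,0}=p^{r/2}\mathbbm{1}_{x+p^r\mathbb{Z}_p}$, and applying Proposition~\ref{Decomposition for p-adic integers} gives the expansion after multiplying by $p^{r/2}$. For $(m,n)\neq(1,0)$, I would introduce the candidate
\[
\varphi \;:=\; p^{-r/2}\sum_{\substack{0\le j<p^r}} \overline{\chi_{m+jp^n,\,n+r}(x)}\,\chi_{m+jp^n,\,n+r},
\]
the sum running over the $p^r$ values of $l=m+jp^n$ satisfying $(l,n+r)\in S$ and $l\equiv m\pmod{p^n}$ (coprimality with $p$ is automatic from $p\nmid m$), and prove $\varphi=\psi_{m,n}$ by evaluating both sides at an arbitrary $z\in\mathbb{Z}_p$.

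On $x+p^r\mathbb{Z}_p$, write $z=x+p^r w$. Using multiplicativity of characters and the identity $\chi_{m+jp^n,\,n+r}(p^r w)=e^{2\pi i(m+jp^n)w/p^n}=e^{2\pi i m w/p^n}=\chi_{m,n}(w)$, each of the $p^r$ summands contributes the same value $\chi_{m,n}(w)$, yielding $\varphi(z)=p^{-r/2}\cdot p^r\cdot\chi_{m,n}(w)=p^{r/2}\chi_{m,n}(w)=\psi_{m,n}(z)$. On the complement, factor out $\chi_{m,n+r}(z-x)$ so that the remaining sum becomes a geometric progression $\sum_{j=0}^{p^r-1}\chi_{1,r}(z-x)^j$; since $z-x\notin p^r\mathbb{Z}_p=\ker\chi_{1,r}$, the ratio $\chi_{1,r}(z-x)$ is a nontrivial $p^r$-th root of unity, and the progression vanishes, giving $\varphi(z)=0=\psi_{m,n}(z)$. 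The main bookkeeping obstacle is keeping the indexing set $S$ straight so as to confirm that exactly $p^r$ indices contribute — which is where the coprimality $p\nmid m$ is used — and to ensure the geometric sum is applied to the correct exponent $(z-x)/p^r$; once this is in place, both cases combine to give $\varphi=\psi_{m,n}$ as claimed.
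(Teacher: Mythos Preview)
Your argument is correct. The first part---constructing a unitary $W$ from $L^2(\mathbb{Z}_p)$ onto the subspace via the measure-rescaling bijection $z\mapsto(z-x)/p^r$---is exactly the paper's argument, phrased more compactly (the paper writes $W$ as $p^{r/2}U_x\circ(\,\cdot\,\circ\iota^{-1})$, with $\iota(y)=p^ry$).

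For the second part you take a genuinely different route. The paper proceeds by Fourier analysis: it computes every inner product $\langle\psi_{m,n},\chi_{l,s}\rangle$ via the substitution $z=x+p^rw$, reduces to $\int_{\mathbb{Z}_p}\chi_{m,n}(w)\overline{\chi_{l,s}(w)}^{p^r}\,d\mu(w)$, identifies the character $\chi_{l,s}^{p^r}$ by a case split on $s\le r$ versus $s>r$, and reads off which coefficients survive by orthogonality. You instead guess the answer and verify it pointwise: on $x+p^r\mathbb{Z}_p$ all $p^r$ summands collapse to the same value $\chi_{m,n}(w)$, and off the coset you factor out $\chi_{m,n+r}(z-x)$ and kill the remaining geometric progression in $\chi_{1,r}(z-x)$. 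Your approach is shorter and avoids the paper's bookkeeping with $\lfloor l/p^{s-r}\rfloor$ and the Kronecker deltas; the paper's approach has the (minor) conceptual advantage of \emph{discovering} the expansion rather than confirming it. One small point worth making explicit in your write-up: the identity $\chi_{m+jp^n,n+r}(p^rw)=\chi_{m,n}(w)$ should be justified via additivity of the character (so $\chi(p^rw)=\chi(w)^{p^r}$) rather than the integer-looking formula $e^{2\pi i(m+jp^n)w/p^n}$, and the $r=0$ case (where the complement is empty and there is a single summand) is trivial but should be acknowledged since $\chi_{1,0}$ is not a primitive $p^r$-th root of unity.
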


\begin{proof}
Let $\iota : \mathbb{Z}_p \to p^r \mathbb{Z}_p$ be the isomorphism given by $\iota(y) = p^r y$ for $y \in \mathbb{Z}_p$. Then the set $\left\{ \chi_{m,n} \circ \iota^{-1} : (m,n) \in S \right\}$ forms an orthogonal basis of $L^2\left(p^r \mathbb{Z}_p\right)$, and hence an orthogonal basis of $L^2\left(p^r\mathbb{Z}_p,\mu|_{p^r\mathbb{Z}_p}\right)$, where each $\chi_{m,n} \circ \iota^{-1}$ is identified as a function on $\mathbb{Z}_p$ by extending it to be zero outside $p^r \mathbb{Z}_p$.

Let $U$ denote the left regular action of $\mathbb{Z}_p$ on $L^2\left(\mathbb{Z}_p\right)$. Then $\left\{ U_x\left( \chi_{m,n} \circ \iota^{-1} \right) : (m,n) \in S \right\}$ is an orthogonal basis of $L^2\left(x + p^r \mathbb{Z}_p,\mu|_{x+p^r\mathbb{Z}_p}\right)$. Since the norm of each such function in $L^2\left(\mathbb{Z}_p\right)$ is $p^{-r/2}$, it follows that $\left\{ p^{r/2} U_x\left( \chi_{m,n} \circ \iota^{-1} \right) : (m,n) \in S \right\}$ is an orthonormal basis of $L^2\left(x + p^r \mathbb{Z}_p,\mu|_{x+p^r\mathbb{Z}_p}\right) = \mathbbm{1}_{x + p^r \mathbb{Z}_p} \cdot L^2\left(\mathbb{Z}_p\right)$.
Hence, the first part of the lemma follows by observing that $\psi_{m,n} = p^{r/2} U_x\left( \chi_{m,n} \circ \iota^{-1} \right)$.

For the second part, let $(l,s)\in S$. Then
$$I:=\left\langle \psi_{m,n}, \chi_{l,s}\right\rangle =\int_{x+p^r\mathbb{Z}_p} \psi_{m,n}(z)\overline{\chi_{l,s}(z)}\,d\mu(z) = p^{r/2}\int_{x+p^r\mathbb{Z}_p}\chi_{m,n}\left(\frac{z-x}{p^r}\right)\cdot\overline{\chi_{l,s}(z)}\,d\mu(z).$$
Applying the transformation $z=x+p^rw$, we get $d\mu (z) = \frac{1}{p^r} d\mu (w)$, and the above integration transforms into
$$p^{-r/2}\int_{\mathbb{Z}_p}\chi_{m,n}(w)\overline{\chi_{l,s}\left(x+p^rw\right)}\,d\mu(w) = p^{-r/2}\overline{\chi_{l,s}(x)}\int_{\mathbb{Z}_p}\chi_{m,n}(w)\overline{\chi_{l,s}(w)}^{p^r}\,d\mu(w).$$
Since 
$$\overline{\chi_{l,s}(w)}^{p^r} = \begin{cases}
\overline{\chi_{1,0}(w)}, &\text{ if } s\leq r,\\
\overline{\chi_{l-\left\lfloor\frac{l}{p^{s-r}}\right\rfloor p^{s-r},s-r}(w)}, &\text{ if } s>r,
\end{cases}$$
we get $I = \begin{cases}
p^{-r/2}\overline{\chi_{l,s}(x)}\delta_{m,1}\cdot\delta_{n,0}, &\text{ if } s\leq r\\
p^{-r/2}\overline{\chi_{l,s}(x)}\delta_{m,l-\left\lfloor\frac{l}{p^{s-r}}\right\rfloor p^{s-r}}\cdot\delta_{n,s-r}, &\text{ if } s>r
\end{cases}$.

Hence $\psi_{1,0} = \displaystyle\sum_{(l,s)\in S} \langle \psi_{1,0},\chi_{l,s}\rangle \chi_{l,s} = p^{-r/2} \sum_{s\leq r} \overline{\chi_{l,s}(x)}\chi_{l,s}$, and for $(m,n)\in S\setminus\{(1,0)\}$,
$$\psi_{m,n} = \sum_{(l,s)\in S} \langle \psi_{m,n},\chi_{l,s}\rangle \chi_{l,s} = p^{-r/2}\sum_{l\equiv m \mod p^n} \overline{\chi_{l,n+r}(x)} \chi_{l,n+r}.$$
\end{proof}

\begin{thm}
Let $\pi$ be the GNS representation of $\mathcal{O}(\mathbb{Z}_p)$ on $L^2(\mathbb{Z}_p)$. Then there does not exist any nontrivial equivariant even spectral triple
$$
\left(\mathcal{O}(\mathbb{Z}_p),\, \pi \oplus \pi,\,
\begin{pmatrix}
0 & D \\
D^* & 0
\end{pmatrix},\,
\begin{pmatrix}
1 & 0 \\
0 & -1
\end{pmatrix}
\right)
$$
of $C(\mathbb{Z}_p)$.
\end{thm}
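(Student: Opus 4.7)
The plan is to show that the K-homology pairing of the spectral triple with every generator of $K_0(C(\mathbb{Z}_p))$ vanishes, i.e., that no K-theory class can detect the triple. Applying Proposition \ref{Equivariance}, equivariance forces $D$ to be diagonal in the character basis: $D\chi_{m,n} = \lambda_{m,n}\chi_{m,n}$ for scalars $\lambda_{m,n} \in \mathbb{C}$. Consequently $D^*$ is diagonal with eigenvalues $\overline{\lambda_{m,n}}$ on the same eigenvectors, so $\ker D = \ker D^*$; as $D$ is Fredholm this gives $\mathrm{Index}(D) = 0$. This computes the pairing of the triple with the generator $[\mathbbm{1}_{\mathbb{Z}_p}] = [1]$.

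For the remaining generators, fix $r \in \mathbb{N}$ and $x \in \mathbb{Z}_p/p^r\mathbb{Z}_p$, and set $P_{x,r} := \pi(\mathbbm{1}_{x+p^r\mathbb{Z}_p})$, noting that $\mathbbm{1}_{x+p^r\mathbb{Z}_p} \in \mathcal{O}(\mathbb{Z}_p)$ by Proposition \ref{Decomposition for p-adic integers}. Let $I(x,r) := \mathrm{Index}(P_{x,r} D P_{x,r})$ denote the corresponding Fredholm index, which computes the K-homology pairing with $[\mathbbm{1}_{x+p^r\mathbb{Z}_p}]$ up to sign. A direct calculation in the GNS representation gives $U_y P_{x,r} U_y^* = P_{x+y,r}$, and combined with the equivariance identity $U_y D U_y^* = D$ this shows that $P_{x+y,r} D P_{x+y,r}$ is unitarily equivalent to $P_{x,r} D P_{x,r}$. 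Hence $I(x,r)$ is independent of $x$; denote the common value by $I_r$.

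The disjoint partition $\mathbbm{1}_{p^{r-1}\mathbb{Z}_p} = \sum_{x \in p^{r-1}\mathbb{Z}_p/p^r\mathbb{Z}_p}\mathbbm{1}_{x+p^r\mathbb{Z}_p}$ expresses $\mathbbm{1}_{p^{r-1}\mathbb{Z}_p}$ as a sum of $p$ mutually orthogonal projections. Additivity of the index pairing, combined with the $x$-independence above, yields the recursion $I_{r-1} = p\,I_r$ in $\mathbb{Z}$. Since $I_0 = \mathrm{Index}(D) = 0$ from the first paragraph, induction on $r$ forces $I_r = 0$ for every $r \geq 0$. By Proposition \ref{Generators of K_0}, these classes exhaust the generators of $K_0(C(\mathbb{Z}_p))$, so the K-theory pairing vanishes identically and the spectral triple is trivial.

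The main obstacle is verifying that the compressions $P_{x,r} D P_{x,r}$ are Fredholm so that $I(x,r)$ really agrees with the K-homology pairing; this is a standard consequence of the bounded commutator condition $[\pi(f), D] \in \mathcal{L}(L^2(\mathbb{Z}_p))$ for $f \in \mathcal{O}(\mathbb{Z}_p)$ together with compactness of $(1+D^*D)^{-1/2}$. With that in place, the whole proof hinges on two clean inputs: equivariance makes $D$ normal (so $I_0 = 0$), and refinement of clopen partitions of $\mathbb{Z}_p$ produces a factor of $p$ in the pairing, propagating the vanishing from $I_0$ to every $I_r$ by torsion-freeness of $\mathbb{Z}$.
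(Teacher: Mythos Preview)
Your proof is correct, but it follows a different and more conceptual route than the paper's.

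The paper proceeds by direct computation: given a generator $q=\mathbbm{1}_{x+p^r\mathbb{Z}_p}$, it constructs in Lemma~\ref{Basis of subspace} an explicit orthonormal basis $\{\psi_{m,n}\}$ of $qL^2(\mathbb{Z}_p)$, expresses each $\psi_{m,n}$ as a finite combination of characters, and then checks that $qDq$ acts diagonally on this basis (with eigenvalues that are explicit averages of the $\lambda$'s). Diagonality immediately forces $\mathrm{Index}(qDq)=0$ for each generator separately.

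Your argument avoids this basis construction entirely. You observe that equivariance makes $D$ normal, so $\mathrm{Index}(D)=I_0=0$; then you use equivariance again, via the conjugation $U_yP_{x,r}U_y^*=P_{x+y,r}$, to see that $I(x,r)$ depends only on $r$; finally additivity of the $K$-pairing on the partition $\mathbbm{1}_{p^{r-1}\mathbb{Z}_p}=\sum_x\mathbbm{1}_{x+p^r\mathbb{Z}_p}$ yields $I_{r-1}=p\,I_r$, and torsion-freeness of $\mathbb{Z}$ propagates $I_0=0$ down to every $I_r$. This is slicker and highlights the structural reason behind the vanishing (divisibility in the $K_0$-group together with translation symmetry), whereas the paper's computation buys you explicit eigenvalues of the compression $qDq$ but at the cost of the auxiliary Lemma~\ref{Basis of subspace}. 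Both approaches ultimately rest on the same first step, namely Proposition~\ref{Equivariance}.
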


\begin{proof}
Suppose, to the contrary, that such a nontrivial equivariant even spectral triple exists. By Proposition~\ref{Equivariance}, for each $(m,n)\in S$, there exists $\lambda_{m,n} \in \mathbb{C}$ such that $D\chi_{m,n} = \lambda_{m,n} \chi_{m,n}$. Let us denote the Dirac operator $\begin{pmatrix}
0 & D \\
D^* & 0
\end{pmatrix}$ by $\mathcal{D}$. Then, nontriviality implies the existence of $r \in \mathbb{N}$ and $x \in \mathbb{Z}_p / p^r \mathbb{Z}_p$ such that the index pairing with the projection $q := \mathbbm{1}_{x + p^r \mathbb{Z}_p} \in K_0(C(\mathbb{Z}_p))$ is nonzero, i.e., $\mathrm{Ind}_{\mathcal{D}}(q) := \mathrm{Index}(qDq) \neq 0$. Consider the Fredholm operator $qDq$ on $qL^2(\mathbb{Z}_p)$, and evaluate it on the basis $\psi_{m,n}$, $(m,n) \in S$, as in Lemma~\ref{Basis of subspace}. For $(m,n) \in S \setminus \{(1,0)\}$,
\begin{align*}
qDq(\psi_{m,n}) &= qD(\psi_{m,n}) \\
&= q\left(p^{-r/2} \sum_{l \equiv m \mod p^n} \overline{\chi_{l,n+r}(x)} \lambda_{l,n+r} \cdot\chi_{l,n+r}\right) \tag*{\text{(by Lemma~\ref{Basis of subspace})}} \\
&= \left\langle p^{-r/2} \sum_{l \equiv m \mod p^n} \overline{\chi_{l,n+r}(x)} \lambda_{l,n+r} \cdot \chi_{l,n+r},\, \psi_{m,n} \right\rangle \psi_{m,n} \\
&= \left(p^{-r} \sum_{l \equiv m \mod p^n} \lambda_{l,n+r}\right) \psi_{m,n}.
\end{align*}
Similarly, $qDq(\psi_{1,0}) = \left(p^{-r} \sum_{s \leq r} \lambda_{l,s} \right) \psi_{1,0}$.
Hence $qDq$ is a diagonal operator on $qL^2(\mathbb{Z}_p)$, and therefore $\mathrm{Ind}_{\mathcal{D}}(q) = 0$, a contradiction. This completes the proof.
\end{proof}

Note that the Monna map $T$ (see \ref{Eq 4.1}) provides the parametrization of $\widehat{\mathbb{Z}_p}$ as follows. For $n\in\mathbb{N}_0$, let $\chi_n:\mathbb{Z}_p\to\mathbb{T}$ be the the element of $\widehat{Z}_p$ specified by $\chi_n(1) = e^{2\pi i T(n)}$. Then $\widehat{\mathbb{Z}_p} = \left\{\chi_n:n\in\mathbb{N}_0\right\}$. Let $\sigma : \mathbb{N}_0 \times \mathbb{N}_0\to\mathbb{N}_0$ be the map such that $\chi_m\cdot\chi_n = \chi_{\sigma (m,n)}$, $m,n\in\mathbb{N}_0$. Moreover, for $m,n\in\mathbb{N}_0$, define $\sigma^0\left(m,n\right) =n$, and inductively for each $i\in\mathbb{N}_0$ define 
$$\sigma^{i+1}\left(m,n\right) = \sigma\left(m,\sigma^i\left(m,n\right)\right).$$
We first begin by proving the following lemmas.

\begin{lmma}\label{Value of sigma 1}
Let $m,n\in\mathbb{N}_0$. If there exists $0\leq k\leq m$ such that $n_k\neq p-1$, we denote $\kappa(m,n) = \max\{k: 0\leq k\leq m,\, n_k\neq p-1\}$. Then
$$\sigma\left(p^m,n\right) = \begin{cases}
n+p^{\kappa (m,n)} + p^{\kappa (m,n) +1} - p^{m+1}, &\text{if } \exists 0\leq k\leq m \text{ such that } n_k\neq p-1,\cr
n+1-p^{m+1}, &\text{otherwise}. \cr
\end{cases}$$
\end{lmma}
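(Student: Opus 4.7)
The plan is to translate the multiplicative identity $\chi_{p^m}\cdot\chi_n=\chi_{\sigma(p^m,n)}$ on $\widehat{\mathbb{Z}_p}$ into an additive identity in $[0,1)$ via the Monna map. Since $\chi_j(1)=e^{2\pi i T(j)}$, evaluating the character equation at $1\in\mathbb{Z}_p$ gives
\[
T\bigl(\sigma(p^m,n)\bigr)\equiv T(p^m)+T(n)\pmod{1}.
\]
Because $p^m$ has a single nonzero base-$p$ digit at position $m$, the left-hand side of $T(p^m)$ is simply $1/p^{m+1}$. Writing $T(n)=\sum_{k\ge 0}n_k/p^{k+1}$ in terms of the base-$p$ digits of $n$, the problem reduces to understanding the effect of incrementing the digit at position $m$ in the base-$p$ expansion of a number in $[0,1)$, and then reading the resulting expansion back through $T^{-1}$ to obtain an element of $\mathbb{N}_0$.

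First I would handle the generic case in which $\kappa:=\kappa(m,n)$ exists. The addition of $1/p^{m+1}$ triggers a chain of carries: digits at positions $m,m-1,\dots,\kappa+1$ are all equal to $p-1$, so each becomes $0$ and propagates a carry one position to the left, and the process terminates at position $\kappa$, which becomes $n_\kappa+1\le p-1$. Digits at positions $k<\kappa$ or $k>m$ are unchanged. Inverting the Monna map, this gives
\[
\sigma(p^m,n)=\sum_{k<\kappa}n_k p^k+(n_\kappa+1)p^\kappa+\sum_{k>m}n_k p^k.
\]
Subtracting the base-$p$ expansion of $n$ and using the geometric sum $\sum_{k=\kappa+1}^{m}(p-1)p^{k}=p^{m+1}-p^{\kappa+1}$ yields $\sigma(p^m,n)-n=p^{\kappa}+p^{\kappa+1}-p^{m+1}$, as claimed.

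Finally, I would treat the exceptional case in which $n_k=p-1$ for every $0\le k\le m$. Here the carry chain runs all the way from position $m$ down through position $0$ and overflows into the integer part of $T(p^m)+T(n)$, which is killed by the reduction modulo $1$. All digits in positions $0,\dots,m$ become $0$ and the higher digits are untouched, giving $\sigma(p^m,n)=n-\sum_{k=0}^{m}(p-1)p^k=n+1-p^{m+1}$. No step is a genuine obstacle; the only care needed is book-keeping the carry cascade and correctly identifying $\kappa(m,n)$ as the position where it terminates.
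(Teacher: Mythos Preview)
Your proposal is correct and follows essentially the same approach as the paper: both evaluate $\chi_{p^m}\cdot\chi_n$ at $1$ to reduce to the congruence $T(\sigma(p^m,n))\equiv T(p^m)+T(n)\pmod 1$, compute the right-hand side by tracking the base-$p$ digit carries induced by adding $1/p^{m+1}$, and then invert using that $T|_{\mathbb{N}_0}$ is injective with image in $[0,1)$. The paper simply writes out the resulting sums explicitly rather than narrating the carry cascade, but the content is identical.
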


\begin{proof}
We have 
\begin{align*}
& T\left(p^m\right) + T(n)\\
&=\frac{1}{p^{m+1}} + \sum_{k=0}^\infty \frac{n_k}{p^{k+1}}\\
&=\begin{cases}
\sum_{k=0}^{\kappa(m,n)-1}\frac{n_k}{p^{k+1}} + \frac{n_{\kappa(m,n)} +1}{p^{\kappa (m,n)}+1} + \sum_{k=m+1}^{\infty} \frac{n_k}{p^{k+1}}, &\text{if } \exists 0\leq k\leq m \text{ such that } n_k\neq p-1\\
1+\sum_{k=m+1}^{\infty}\frac{n_k}{p^{k+1}}, &\text{otherwise}
\end{cases}\\
&=\begin{cases}
T\left(n+p^{\kappa(m,n)} + p^{\kappa(m,n)+1} - p^{m+1}\right), &\text{if } \exists 0\leq k\leq m \text{ such that } n_k\neq p-1, \cr
1+ T\left(n+1-p^{m+1}\right), &\text{otherwise}.\cr
\end{cases}
\end{align*}
Therefore, we have
\begin{align*}
e^{2\pi i T\left(\sigma\left(p^m,n\right)\right)} &= \chi_{\sigma\left(p^m,n\right)}(1)\\
&= \chi_{p^m}\cdot\chi_n (1)\\
&= e^{2\pi i\left(T\left(p^m\right) + T(n)\right)}\\
&=\begin{cases}
e^{2\pi i T\left(n+p^{\kappa(m,n)} + p^{\kappa(m,n)+1} - p^{m+1}\right)}, &\text{if } \exists 0\leq k\leq m \text{ such that } n_k\neq p-1\\
e^{2\pi iT\left(n+1-p^{m+1}\right)}, &\text{otherwise}.
\end{cases}
\end{align*}
Since $T\left[\mathbb{N}_0\right]\subseteq [0,1)$, and $T|_{\mathbb{N}_0}$ is injective,  we get the claim.
\end{proof}

\begin{lmma}\label{Characterization of phi}
Let $m,l\in\mathbb{N}_0$. Then
$$\sigma^i\left(p^m,lp^{m+1}\right) = \begin{cases}
lp^{m+1} + \sum_{k=0}^m i_{m-k} p^k, &\text{if } 0\leq i\leq p^{m+1}-1\\
\sigma^{i-\left\lfloor\frac{i}{p^{m+1}}\right\rfloor p^{m+1}}\left(p^m,lp^{m+1}\right), &\text{if } i\geq p^{m+1}.
\end{cases}$$
\end{lmma}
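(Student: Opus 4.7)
The plan is to lift the iteration $\sigma^i$ to the real line via the Monna map $T$. Since $T|_{\mathbb{N}_0}$ is injective with $T[\mathbb{N}_0]\subseteq[0,1)$ and $\chi_{\sigma(a,b)}(1)=\chi_a(1)\chi_b(1)$ for all $a,b\in\mathbb{N}_0$, a straightforward induction on $i$ yields
\[
e^{2\pi i\,T(\sigma^i(p^m,lp^{m+1}))}\;=\;e^{2\pi i\left(i\cdot T(p^m)+T(lp^{m+1})\right)}\;=\;e^{2\pi i\left(\frac{i}{p^{m+1}}+T(lp^{m+1})\right)},
\]
so that $T(\sigma^i(p^m,lp^{m+1}))\equiv i/p^{m+1}+T(lp^{m+1})\pmod 1$. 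By injectivity of $T$ on $\mathbb{N}_0$, the value $\sigma^i(p^m,lp^{m+1})$ is completely determined by this congruence, so the proof reduces to identifying the unique element of $T[\mathbb{N}_0]\cap[0,1)$ that represents the right-hand side.

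I would first dispose of the periodic (second) branch. For $i\geq p^{m+1}$, set $r:=i-\lfloor i/p^{m+1}\rfloor p^{m+1}$, so that $0\leq r<p^{m+1}$ and $i/p^{m+1}-r/p^{m+1}\in\mathbb{Z}$. The displayed congruence is therefore unchanged upon replacing $i$ by $r$, and injectivity of $T$ forces $\sigma^i(p^m,lp^{m+1})=\sigma^r(p^m,lp^{m+1})$, which is exactly the second branch of the formula.

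For the principal case $0\leq i\leq p^{m+1}-1$, write $i=\sum_{k=0}^{m}i_k p^k$. Then
\[
\frac{i}{p^{m+1}}\;=\;\sum_{k=0}^{m}\frac{i_{m-k}}{p^{k+1}},
\]
a terminating base-$p$ expansion in $[0,1)$ with digit $i_{m-k}$ in position $k$ for $0\leq k\leq m$ and digit $0$ elsewhere. Meanwhile $T(lp^{m+1})=\sum_{j\geq 0}l_j/p^{j+m+2}$ carries digit $0$ in positions $0,\ldots,m$ and the digits of $l$ in positions $\geq m+1$; hence the supports of the two expansions are disjoint, no carrying occurs, and the sum is a bona fide base-$p$ expansion of a number in $[0,1)$ that, by the definition of $T$, equals $T(N)$ for $N:=lp^{m+1}+\sum_{k=0}^{m}i_{m-k}p^k$. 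Injectivity of $T|_{\mathbb{N}_0}$ then gives $\sigma^i(p^m,lp^{m+1})=N$, establishing the first branch.

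The only step that genuinely needs to be checked is the disjointness of digit supports that guarantees no carrying; this is immediate from $0\leq i\leq p^{m+1}-1$, so no intricate case analysis is required. An alternative strategy by direct induction on $i$ using Lemma \ref{Value of sigma 1} is possible but would require tracking how the index $\kappa(m,\cdot)$ evolves under repeated application of $\sigma(p^m,\cdot)$, which is noticeably more cumbersome than the character-theoretic reduction outlined above.
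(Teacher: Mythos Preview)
Your argument is correct and is genuinely different from the paper's. The paper proceeds by direct induction on $i$ using Lemma~\ref{Value of sigma 1}: at each step it identifies the index $\kappa_0(i)=\min\{k:i_k\neq p-1\}$, relates it to $\kappa(m,\sigma^i(p^m,lp^{m+1}))$, and pushes through the carry by hand to get the digits of $\sigma^{i+1}$; the periodicity is then obtained by computing $\sigma^{p^{m+1}}(p^m,lp^{m+1})=lp^{m+1}$ via the second branch of Lemma~\ref{Value of sigma 1}. Your approach bypasses Lemma~\ref{Value of sigma 1} altogether: you observe once that $T(\sigma(a,b))\equiv T(a)+T(b)\pmod 1$, iterate to get $T(\sigma^i(p^m,lp^{m+1}))\equiv i/p^{m+1}+T(lp^{m+1})\pmod 1$, and then read off the answer from the fact that the base-$p$ expansions of $i/p^{m+1}$ and $T(lp^{m+1})$ occupy disjoint digit positions. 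This is shorter and conceptually cleaner; it also makes the periodicity in $i$ transparent (it is just $1/p^{m+1}$ having order $p^{m+1}$ in $\mathbb{R}/\mathbb{Z}$), whereas the paper's induction extracts the same fact from the explicit digit bookkeeping. The price you pay is that Lemma~\ref{Value of sigma 1} becomes unused for this lemma, though it is still needed independently in the paper's narrative. One cosmetic point: you use $i$ for both the iteration index and the imaginary unit in $e^{2\pi i(\cdots)}$; there is no mathematical ambiguity, but renaming the index (say to $j$) would avoid the clash.
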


\begin{proof}
We first prove the given expression of $\sigma^i\left(p^m,lp^{m+1}\right)$ when $0\leq i\leq p^{m+1}-1$, by applying induction on the variable $i$. Note that by definition the expression holds for $i=0$. Let $0\leq i\leq p^{m+1}-2$, and assume that the expression of $\sigma^i\left(p^m,lp^{m+1}\right)$ holds true. Define $\kappa_0(i) =\min\{k: 0\leq k\leq m,\, i_k\neq p-1\}$. Observe that $\kappa\left(m,\sum_{k=0}^m i_{m-k} p^k\right) = m-\kappa_0(i)$, and 
$$(i+1)_k =\begin{cases}
0, &\text{if } 0\leq k <\kappa_0(i)\\
i_{\kappa_0(i)} +1, &\text{if } k=\kappa_0(i)\\
i_k, &\text{if } \kappa_0(i)<k\leq m.
\end{cases}$$ 
Let $I=\{0,1,\ldots,m\}$. We then have
\begin{align*}
&\sigma^{i+1}\left(p^m,lp^{m+1}\right)\\
&= \sigma\left(p^m,\sigma^i\left(p^m,lp^{m+1}\right)\right)\\
&= \sigma\left(p^m,lp^{m+1} + \sum_{k=0}^m i_{m-k} p^k\right)\\
&= lp^{m+1} + \sum_{k=0}^m i_{m-k} p^k + p^{m-\kappa_0(i)} + p^{m-\kappa_0(i)+1} - p^{m+1}\\
&= lp^{m+1} + \left(\sum_{\substack{k\in I\\ k\leq m-\kappa_0(i)}} i_{m-k} p^k\right) + \left(\sum_{\substack{k\in I\\ k>m-\kappa_0(i)}} (p-1)p^k\right) + p^{m-\kappa_0(i)} + p^{m-\kappa_0(i) +1} - p^{m+1}\\
&= lp^{m+1} + \left(\sum_{\substack{k\in I\\ k< m-\kappa_0(i)}} i_{m-k} p^k + i_{\kappa_0(i)} p^{m-\kappa_0(i)}\right) + \left(p^{m+1} - p^{m-\kappa_0(i) +1}\right) + p^{m-\kappa_0(i)} + p^{m-\kappa_0(i) +1} - p^{m+1}\\
&= lp^{m+1} + \sum_{\substack{k\in I\\ k< m-\kappa_0(i)}} i_{m-k} p^k + \left(i_{\kappa_0(i)} +1\right)p^{m-\kappa_0(i)}\\
&= lp^{m+1} + \sum_{\substack{k\in I\\ k< m-\kappa_0(i)}} (i+1)_{m-k} p^k + (i+1)_{\kappa_0(i)} p^{m-\kappa_0(i)} + \left(\sum_{\substack{k\in I\\ k> m-\kappa_0(i)}} (i+1)_{m-k} p^k\right)\\
&= lp^{m+1} + \sum_{k=0}^m (i+1)_{m-k} p^k,
\end{align*}
and hence the expression of $\sigma^i\left(p^m,0\right)$, for $0\leq i\leq p^{m+1}-1$, get established by induction on $i$, as promised. Moreover, we have
\begin{align*}
\sigma^{p^{m+1}}\left(p^m,lp^{m+1}\right) &= \sigma\left(p^m,\sigma^{p^{m+1}-1}\left(p^m,lp^{m+1}\right)\right)\\
&=\sigma\left(p^m,lp^{m+1} + \sum_{k=0}^m (p-1)p^k\right)\\
&=lp^{m+1} + \sum_{k=0}^m (p-1)p^k +1 - p^{m+1}\quad (\text{by Lemma } \ref{Value of sigma 1})\\
&= lp^{m+1} = \sigma^0\left(p^m,lp^{m+1}\right),
\end{align*}
and therefore we get for each $i\geq p^{m+1}$ that $\sigma^i\left(p^m,lp^{m+1}\right) = \sigma^{i-\left\lfloor \frac{i}{p^{m+1}}\right\rfloor p^{m+1}} \left(p^m,lp^{m+1}\right)$. This completes the proof.
\end{proof}

Let $m \in \mathbb{N}_0$. For each $l \in \mathbb{N}_0$, define $P_{m,l} = \{ n + l p^{m+1} : n = 0,1,\dots, p^{m+1}-1 \}$. Obseve the collection $P = \{ P_{m,l} : l \in \mathbb{N}_0 \}$ partitions $\mathbb{N}_0$. Also, note that for fixed $l,c\in\mathbb{N}_0$ with $0\leq c<p^{m+1}$, one has $$\left\{\sigma^i\left(p^m,lp^{m+1}+c\right):0\leq i<p^{m+1}\right\} = \left\{lp^{m+1} + d : 0\leq d<p^{m+1}\right\}.$$

\begin{lmma}\label{partition}
Let $\phi: \mathbb{N}_0 \to \mathbb{N}_0$ be an injection such that the set $\mathbb{N}_0 \setminus \phi(\mathbb{N}_0)$ is nonempty and finite. Then, there exists an $M \in \mathbb{N}$ such that for all $m \geq M$ and any $L \in \mathbb{N}$, there exist $l_1, l_2, l_3 \in \mathbb{N}_0$ with $l_1 \geq L$ and $l_2 \neq l_3$ satisfying $\phi(P_{m,l_1}) \cap P_{m,l_2} \neq \emptyset$ and $\phi(P_{m,l_1}) \cap P_{m,l_3} \neq \emptyset$.
\end{lmma}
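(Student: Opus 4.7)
The plan is to proceed by contradiction and exploit the mismatch between the finite deficit $k := |\mathbb{N}_0 \setminus \phi(\mathbb{N}_0)| \geq 1$ and the block size $p^{m+1}$. I would take any $M \in \mathbb{N}$ with $p^{M+1} > k$, fix $m \geq M$ and $L \in \mathbb{N}$, and assume toward contradiction that no admissible triple $(l_1, l_2, l_3)$ exists for this pair $(m, L)$. Then for every $l_1 \geq L$ the set $\phi(P_{m,l_1})$ meets at most one block of the partition $\{P_{m,l}\}_{l \in \mathbb{N}_0}$, hence is entirely contained in a single such block.

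Next, I would upgrade this containment to an equality. Since $\phi$ is injective and $|P_{m,l_1}| = p^{m+1} = |P_{m,l}|$, being contained in a single block forces $\phi(P_{m,l_1}) = P_{m, f(l_1)}$ for a well-defined and automatically injective function $f : \{L, L+1, \ldots\} \to \mathbb{N}_0$. Taking unions over $l_1 \geq L$ yields the key structural identity
\[
\phi\bigl(\{n \in \mathbb{N}_0 : n \geq L p^{m+1}\}\bigr) = \bigcup_{l \in J} P_{m,l},
\]
where $J := f(\{L, L+1, \ldots\})$. By the injectivity of $\phi$, this image is disjoint from the $L p^{m+1}$-element set $\phi(\{0, 1, \ldots, L p^{m+1} - 1\})$.

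To close the argument I would set $Q := \mathbb{N}_0 \setminus J$ and count. If $Q$ were infinite, then $\bigcup_{l \in Q} P_{m,l}$ would meet $\phi(\mathbb{N}_0)$ only inside the finite set $\phi(\{0, \ldots, L p^{m+1} - 1\})$, forcing $\mathbb{N}_0 \setminus \phi(\mathbb{N}_0)$ to be infinite and contradicting the hypothesis; so $Q$ must be finite. Because $\phi(\{0, \ldots, L p^{m+1} - 1\})$ sits inside $\bigcup_{l \in Q} P_{m,l}$ and exhausts exactly $L p^{m+1}$ of its $|Q| \cdot p^{m+1}$ elements, I obtain
\[
k = |\mathbb{N}_0 \setminus \phi(\mathbb{N}_0)| = (|Q| - L)\, p^{m+1}.
\]
This forces $p^{m+1}$ to divide the positive integer $k$, contradicting $p^{m+1} \geq p^{M+1} > k$. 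The only delicate point, which I see as the main obstacle, is verifying carefully that the block-preserving hypothesis really does produce both the equality $\phi(P_{m,l_1}) = P_{m,f(l_1)}$ and the disjoint decomposition of $\phi(\mathbb{N}_0)$ into the ``low'' part of size $L p^{m+1}$ and the ``high'' part $\bigcup_{l \in J} P_{m,l}$; once these are pinned down, the divisibility contradiction is immediate.
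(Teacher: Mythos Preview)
Your proposal is correct and follows essentially the same route as the paper: choose $M$ with $p^{M+1}>k$, assume by contradiction that every $\phi(P_{m,l_1})$ with $l_1\geq L$ lands in a single block, upgrade containment to equality by injectivity and equal cardinality, and then derive $k=(|Q|-L)\,p^{m+1}$ for a suitable finite cofinite-index set, contradicting $p^{m+1}>k$. The only cosmetic difference is that you make the passage from ``meets at most one block'' to ``equals a block'' explicit, whereas the paper states the equality form of the contradiction hypothesis directly.
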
 

\begin{proof}
Choose $M \in \mathbb{N}$ such that $p^{M+1} > |\mathbb{N}_0 \setminus \phi(\mathbb{N}_0)|$, and fix $m \geq M$. Let $L \in \mathbb{N}$ and consider the sets $P_{m,l}$ for $l \geq L$. Assume, for contradiction, that for each $l \geq L$, there exists $l' \geq 0$ such that $\phi(P_{m,l}) = P_{m,l'}$. Then we have

\[
\left|\mathbb{N}_0 \setminus \phi\left(\bigcup_{l=L}^{\infty} P_{m,l}\right)\right| = k p^{m+1}
\]
for some $k \in \mathbb{N}$ or $k = \infty$. Since $\mathbb{N}_0 \setminus \phi(\mathbb{N}_0)$ is finite, the case $k = \infty$ is impossible. Moreover, injectivity of $\phi$ implies  
\[
\left|\phi\left(\bigcup_{l=0}^{L-1} P_{m,l}\right)\right| = L p^{m+1}.
\]
Thus, for a finite $k$, we obtain 
\[
0 < \left|\mathbb{N}_0 \setminus \phi(\mathbb{N}_0)\right| =
\left|\left(\mathbb{N}_0 \setminus \phi\left(\bigcup_{l=L}^{\infty} P_{m,l}\right)\right) \setminus \bigcup_{l=0}^{L-1} P_{m,l} \right| = (k - L)p^{m+1},
\]
contradicting the bound $\left|\mathbb{N}_0 \setminus \phi(\mathbb{N}_0)\right| < p^{m+1}$. Hence there exists some $l_1 \geq L$ such that $\phi(P_{m,l_1})$ intersects at least two distinct sets of the form $P_{m,l}$, completing the proof.
\end{proof}

\begin{lmma}\label{Nonexistence of phi}
There does not exist an injective function $\phi :\mathbb{N}_0\to\mathbb{N}_0$ such that the following two conditions hold.
\begin{enumerate}[(i)]
\item\label{cond1, nonexistence} The set $\mathbb{N}_0\setminus \phi\left(\mathbb{N}_0\right)$ is a nonempty finite set.
\item\label{cond2, nonexistence} For each $m\in\mathbb{N}_0$, there exists $m_0\in\mathbb{N}_0$ such that $\phi\left(\sigma\left(p^m,n\right)\right) = \sigma\left(p^m,\phi(n)\right)$ for each $n\geq m_0$.
\end{enumerate}
\end{lmma}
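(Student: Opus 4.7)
The plan is to reach a contradiction by combining the rigid block structure of the $\sigma(p^m,\cdot)$-orbits with Lemma \ref{partition}. Assume for contradiction that such an injection $\phi$ exists. The key observation, already recorded just before Lemma \ref{partition}, is that the orbits of the permutation $n \mapsto \sigma(p^m,n)$ on $\mathbb{N}_0$ are precisely the blocks $P_{m,l}$, each of cardinality $p^{m+1}$. This blockwise dynamics is what makes condition (ii) incompatible with condition (i).

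I would first fix any $m \geq M$, where $M$ is provided by Lemma \ref{partition}, and then use condition (ii) to choose $m_0$ so that $\phi(\sigma(p^m,n)) = \sigma(p^m,\phi(n))$ for all $n \geq m_0$. Next I would pick $L \in \mathbb{N}$ so large that $Lp^{m+1} \geq m_0$, so that for every $l \geq L$ the entire block $P_{m,l}$ lies in the region where the intertwining relation applies. For such an $l$ and any $n \in P_{m,l}$, one has $\sigma(p^m,\phi(n)) = \phi(\sigma(p^m,n)) \in \phi(P_{m,l})$, so $\phi(P_{m,l})$ is invariant under $\sigma(p^m,\cdot)$. Injectivity forces $|\phi(P_{m,l})| = p^{m+1}$, and since every $\sigma(p^m,\cdot)$-invariant subset of $\mathbb{N}_0$ is a disjoint union of full orbits, each of cardinality $p^{m+1}$, the image $\phi(P_{m,l})$ must coincide with a single block $P_{m,l'}$.

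The contradiction now follows immediately from Lemma \ref{partition}, which, applied to the same $m$ and our chosen $L$, supplies some $l_1 \geq L$ for which $\phi(P_{m,l_1})$ meets at least two distinct blocks $P_{m,l_2}$ and $P_{m,l_3}$. This is incompatible with the previous step, which forced $\phi(P_{m,l_1})$ to lie inside a single block, so no such $\phi$ can exist.

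I expect the main (minor) obstacle to be purely quantifier bookkeeping: arranging the two thresholds $M$ and $m_0$ and then choosing $L$ so that (a) the intertwining applies to every block $P_{m,l}$ with $l \geq L$ and (b) Lemma \ref{partition} can still be invoked at that $L$. Once the quantifiers are correctly ordered, the cycle structure of $\sigma(p^m,\cdot)$ is so rigid that the contradiction is essentially automatic; no delicate estimates or additional constructions should be required.
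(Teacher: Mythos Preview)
Your argument is correct and follows essentially the same route as the paper: fix $m\ge M$, use condition~(ii) to get the intertwining on all of $P_{m,l}$ for $l\ge L$, and then contradict Lemma~\ref{partition}. The only cosmetic difference is that you deduce $\phi(P_{m,l})=P_{m,l'}$ by the clean observation that $\phi(P_{m,l})$ is a $\sigma(p^m,\cdot)$-invariant set of size $p^{m+1}$ and hence a single orbit, whereas the paper reaches the same conclusion by walking along the orbit and locating the first index where the image block would have to change.
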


\begin{proof}
Suppose that there exists an injective function $\phi : \mathbb{N}_0\to\mathbb{N}_0$ satisfying  (\ref{cond1, nonexistence}) and (\ref{cond2, nonexistence}).  Choose $M$ such that $p^{M+1} > \left|\mathbb{N}_0\setminus\phi\left(\mathbb{N}_0\right)\right|$, and let $m\geq M$. By condition (\ref{cond2, nonexistence}), there exists $m_0\in\mathbb{N}_0$ such that  
$$\phi\left(\sigma\left(p^m,n\right)\right) = \sigma\left(p^m,\phi(n)\right) \quad \text{for all } n\geq m_0.$$  
Pick $L\in\mathbb{N}$ such that $Lp^{m+1}\geq m_0$. By Lemma \ref{partition}, one can take $l_1, l_2, l_3\in\mathbb{N}_0$ with $l_1\geq L$ and $l_2\neq l_3$ such that  
$$\phi\left(P_{m,l_1}\right)\cap P_{m,l_2} \neq \emptyset \quad \text{and} \quad \phi\left(P_{m,l_1}\right)\cap P_{m,l_3} \neq \emptyset.$$ 
Let $x,y\in P_{m,l_1}$ be such that $\phi(x)\in P_{m,l_2}$ and $\phi(y)\in P_{m,l_3}$. By Lemma \ref{Characterization of phi}, there exist $i_1,i_2$ with $0\leq i_1, i_2 < p^{m+1}$ such that  
$$x = \sigma^{i_1}\left(p^m,l_1 p^{m+1}\right) \quad \text{and}\quad  y = \sigma^{i_2}\left(p^m,l_1 p^{m+1}\right).$$ 
Since $x\neq y$, we have $i_1\neq i_2$, i.e., either $i_1<i_2$ or $i_1> i_2$.
\begin{itemize}
\item \textbf{Case 1:} If $i_1 < i_2$, define 
$$i_3 = \min\left\{ i : i_1\leq i<p^{m+1}-1,\,\phi\left(\sigma^i\left(p^m,l_1p^{m+1}\right)\right)\in P_{m,l_2},\,\phi\left(\sigma^{i+1}\left(p^m,l_1p^{m+1}\right)\right)\notin P_{m,l_2} \right\}.$$ 
Since $i_1 < i_2$, $\phi\left(\sigma^{i_1}\left(p^m,l_1 p^{m+1}\right)\right)\in P_{m,l_2}$ and $\phi\left(\sigma^{i_2}\left(p^m,l_1p^{m+1}\right)\right)\notin P_{m,l_2}$, we have that $i_3$ exists with $i_1\leq i_3\leq i_2-1$. Since $\sigma^{i_3}\left(p^m, l_1p^{m+1}\right) \in P_{m,l_1}$, by condition (\ref{cond2, nonexistence}),  
$$\phi\left(\sigma^{i_3 +1}\left(p^m,l_1 p^{m+1}\right)\right) = \phi\left(\sigma\left(p^m,\sigma^{i_3}\left(p^m,l_1 p^{m+1}\right)\right)\right) = \sigma\left(p^m,\phi\left(\sigma^{i_3}\left(p^m,l_1p^{m+1}\right)\right)\right)\in P_{m,l_2},$$
which contradicts the definition of $i_3$.
\item \textbf{Case 2:} If $i_1 > i_2$, following an analogous argument as in Case 1, we again arrive at  a contradiction.
\end{itemize}
Thus, no such injective function $\phi$  exists.
\end{proof}

\begin{thm}\label{main}
There does not exist an even spectral triple  
$$\left(\mathcal{O}(\mathbb{Z}_p), \pi\oplus\pi,  
\begin{pmatrix}
0 & D \\
D^* & 0
\end{pmatrix}, \begin{pmatrix}
1 & 0 \\
0& -1
\end{pmatrix}
\right)$$
of  $C(\mathbb{Z}_p)$, where $\pi$ is the GNS representation of $\mathcal{O}\left(\mathbb{Z}_p\right)$ on $L^2\left(\mathbb{Z}_p\right)$, and $D$ is an injective operator with $\mathrm{Index}(D)\neq 0$ satisfying  
$$D(\chi_n) = \lambda_n \chi_{\phi(n)},\, \lambda_n\in\mathbb{C},\, n\in\mathbb{N}_0,$$
for some function $\phi: \mathbb{N}_0 \to \mathbb{N}_0$.
\end{thm}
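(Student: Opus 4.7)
The overall strategy is to assume such a spectral triple exists and derive a contradiction by checking that the map $\phi$ satisfies both hypotheses of Lemma \ref{Nonexistence of phi}. I would first extract structural information on $\phi$ and on the spectrum $\{\lambda_n\}$ from the spectral-triple axioms, and then invoke that lemma.

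For hypothesis (i), injectivity of $D$ forces $\lambda_n \neq 0$ for every $n$, and linear independence of $\{\chi_m\}$ then forces $\phi$ to be injective. A direct computation gives $D^*\chi_{\phi(n)} = \overline{\lambda_n}\chi_n$ while $D^*\chi_m = 0$ for $m \notin \phi(\mathbb{N}_0)$, so both $D^*D$ and $DD^*$ are diagonal in the basis $\{\chi_n\}$: $D^*D\chi_n = |\lambda_n|^2\chi_n$, and $DD^*$ has the same nonzero eigenvalues together with the eigenvalue $0$ of multiplicity $|\mathbb{N}_0 \setminus \phi(\mathbb{N}_0)|$. The compact-resolvent axiom for $\mathcal{D}$ therefore becomes $|\lambda_n|\to\infty$ together with $|\mathbb{N}_0 \setminus \phi(\mathbb{N}_0)| < \infty$; moreover $\mathrm{Index}(D) = -|\mathbb{N}_0 \setminus \phi(\mathbb{N}_0)|$, so the assumption $\mathrm{Index}(D)\neq 0$ also rules out $\mathbb{N}_0 \setminus \phi(\mathbb{N}_0)$ being empty. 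This is exactly condition (i) of Lemma \ref{Nonexistence of phi}.

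For hypothesis (ii), the key observation is that $\chi_{p^m} \in \mathcal{O}(\mathbb{Z}_p)$ for every $m \in \mathbb{N}_0$, so the commutator $[D, \pi(\chi_{p^m})]$ must extend to a bounded operator on $L^2(\mathbb{Z}_p)$. Since $\pi(\chi_{p^m})\chi_n = \chi_{\sigma(p^m,n)}$, a direct evaluation yields
$$[D,\pi(\chi_{p^m})]\chi_n \;=\; \lambda_{\sigma(p^m,n)}\,\chi_{\phi(\sigma(p^m,n))} \;-\; \lambda_n\,\chi_{\sigma(p^m,\phi(n))}.$$
Whenever $\phi(\sigma(p^m,n)) \neq \sigma(p^m,\phi(n))$, the two summands are orthogonal basis vectors, so the norm of the commutator applied to $\chi_n$ is at least $|\lambda_n|$. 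Because $|\lambda_n|\to\infty$ while $\|[D,\pi(\chi_{p^m})]\|$ is finite, only finitely many $n$ can witness this inequality; equivalently, there exists $m_0\in\mathbb{N}_0$ such that $\phi(\sigma(p^m,n)) = \sigma(p^m,\phi(n))$ for all $n\geq m_0$, which is precisely condition (ii).

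With both conditions verified, Lemma \ref{Nonexistence of phi} gives the desired contradiction. The main obstacle I expect is not in the analytic translation performed above, which is rather short once one notes that $|\lambda_n|\to\infty$ upgrades boundedness of the commutator into an \emph{exact} asymptotic identity of indices; the genuinely combinatorial difficulty of showing that no such $\phi$ can exist has already been handled by Lemmas \ref{Characterization of phi}--\ref{Nonexistence of phi}, where the explicit description of the orbits of $n\mapsto \sigma(p^m,n)$ as the blocks $P_{m,l}$ forces any injection with finite nonempty co-image to collide with the required intertwining at some scale $p^{m+1}$.
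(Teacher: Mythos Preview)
Your proposal is correct and follows essentially the same route as the paper: derive conditions (i) and (ii) of Lemma \ref{Nonexistence of phi} from the injectivity/index hypothesis and from boundedness of the commutators $[D,\pi(\chi_{p^m})]$ combined with $|\lambda_n|\to\infty$, then invoke that lemma for the contradiction. Your treatment is in fact slightly more explicit than the paper's in justifying $|\lambda_n|\to\infty$ and the finiteness of $\mathbb{N}_0\setminus\phi(\mathbb{N}_0)$ via the diagonal form of $D^*D$ and $DD^*$ and the compact-resolvent axiom.
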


\begin{proof}
Suppose that such a spectral triple exists. Since $D$ is injective, $\phi$ must also be injective, and its nonzero index implies that $\mathbb{N}_0\setminus\phi(\mathbb{N}_0)$ is a nonempty finite set.
For each $m \in \mathbb{N}_0$, the commutator $\left[D, \pi(\chi_{p^m})\right]$ is bounded. Note that, for all $m,n\in\mathbb{N}_0$,
\[
\left[D, \pi(\chi_{p^m})\right]\chi_n = \lambda_{\sigma(p^m,n)}\chi_{\phi(\sigma(p^m,n))} - \lambda_n\chi_{\sigma(p^m,\phi(n))}.
\]
Define the set $S_m = \{n \in \mathbb{N}_0 : \phi(\sigma(p^m,n)) \neq \sigma(p^m,\phi(n))\}$. For each $n\in S_m$, we obtain the bound
\[
\|\left[D, \pi(\chi_{p^m})\right]\chi_n\| = \left(\left|\lambda_{\sigma(p^m,n)}\right|^2 + \left|\lambda_n\right|^2\right)^{1/2} \leq \|\left[D, \pi(\chi_{p^m})\right]\|.
\]
Since $D$ has compact resolvent, we have $\lambda_n \to \infty$ as $n\to\infty$, which forces $S_m$ to be finite. Setting $m_0 = \max S_m +1$, it follows that $\phi(\sigma(p^m,n)) = \sigma(p^m,\phi(n))$, for all $n \geq m_0$.
However, by Lemma \ref{Nonexistence of phi}, no such function $\phi$  exist, leading to a contradiction. Hence such spectral triple does not exist.
\end{proof}

\begin{rmrk}
It is natural to ask whether any equivariant even spectral triple, or any such spectral triple with a weighted shift-type Dirac operator, fails to exist for the $p$-adic Heisenberg groups $\mathbb{H}_d(\mathbb{Z}_p)$, as shown in the case of $\mathbb{Z}_p$. Determining the existence or nonexistence of nontrivial equivariant even spectral triples for such compact Vilenkin groups remains an open and interesting problem. We plan to investigate these questions in a subsequent article.
\end{rmrk}

\bigskip

\noindent\begin{footnotesize}\textbf{Acknowledgement}:
The first named author sincerely thanks Professor Partha Sarathi Chakraborty for his valuable suggestions and insightful discussions. A part of this work was carried out during his tenure as a postdoctoral fellow at IIT Gandhinagar, and he gratefully acknowledges all the support provided by the institute. He also acknowledges support from the INSPIRE Faculty Fellowship research grant DST/INSPIRE/04/2021/002620, DST, Government of India. Both authors acknowledge support from the NBHM research project grant\linebreak 02011/19/2021/NBHM(R.P)/R\&D II/8758.
\end{footnotesize}

\bigskip

\noindent{\sc Surajit Biswas}  (\texttt{surajitb@iiitd.ac.in},
\texttt{241surajit@gmail.com})\\
    {\footnotesize Department of Mathematics,\\ Indraprastha Institute of Information Technology Delhi,\\ Okhla Industrial Estate, Phase III, New Delhi, Delhi 110020, India}\\

\noindent{\sc Bipul Saurabh} (\texttt{bipul.saurabh@iitgn.ac.in},  \texttt{saurabhbipul2@gmail.com})\\
	{\footnotesize Department of Mathematics,\\ Indian Institute of Technology, Gandhinagar,\\  Palaj, Gandhinagar 382055, India}

\end{document}